\newtheorem{theorem}{Theorem}
\newtheorem*{Ch}{Charatheodory's Theorem}
\newtheorem*{FJ}{Fritz John's Theorem}
\newtheorem*{Separation}{Strong Separation Theorem}
\newtheorem{proposition}{Proposition}
\newtheorem{corollary}{Corollary}
\newtheorem{lemma}{Lemma}
\newtheorem{definition}{Definition}
\newtheorem{remark}{Remark}
\newtheorem{example}{Example}
\newcommand{\R}{\mathbf R}
\newcommand{\f}{f : X\to\R^n}
\newcommand{\g}{g : X\to\R^m}
\newcommand{\norm}[1]{\lVert#1\rVert}
\title{On the Necessity of the Sufficient Conditions in Cone-Constrained Vector Optimization}
\author{Vsevolod I. Ivanov
\\ \small Department of Mathematics, Technical University of Varna, Varna, Bulgaria
\\ \small Email: vsevolodivanov@yahoo.com 
}
\begin{document}
\maketitle

\begin{abstract}
The object of investigation in this paper are vector nonlinear programming problems with cone constraints.
We introduce the notion of a Fritz John pseudoinvex cone-constrained vector problem. We prove that a problem with cone constraints is Fritz John pseudoinvex if and only if every vector critical point of Fritz John type is a weak global minimizer. Thus, we generalize several results, where the Paretian case have been studied.

We also introduce a new Fr\'echet differentiable pseudoconvex problem. We derive that a problem with quasiconvex vector-valued data is pseudoconvex if and only if every Fritz John vector critical point is a weakly efficient global solution.  Thus, we generalize a lot of previous optimality conditions, concerning the scalar case and the multiobjective Paretian one. 

Additionally, we prove that a quasiconvex vector-valued function is pseudoconvex with respect to the same cone if and only if every vector critical point of the function is a weak global minimizer, a result, which is a natural extension of a known characterization of pseudoconvex scalar functions.

{\bf 2010 Mathematics Subject Classification:} 90C26; \quad 90C29; \quad 90C46; \quad 26B12; \quad 26B25

{\bf Key words and phrases:} vector optimization; Fritz John sufficient optimality conditions;  pseudoconvex and quasiconvex vector-valued functions; cone-constrained FJ-pseudoinvex problems

\end{abstract}
\section{Introduction}
Pseudoconvex and quasiconvex functions play important role in scalar optimization. The necessary conditions for optimality become sufficient when the objective function is pseudoconvex and the inequality constraints are quasiconvex. They can be generalized to various classes of vector-valued functions. Some of these  extensions have similar role as in the scalar case. Several classes of quasiconvex and pseudoconvex vector functions have been studied in optimization theory (see the books \cite{luc89,luc05,cam05,jah11} and the reference therein).  In some of them,  the authors consider generalized convex vector-valued functions, which satisfy other generalized convexity assumptions with known sufficient optimality conditions. Really, the most of these notions were not applied in sufficient optimality conditions. There are several works, where were obtained optimality conditions in vector problems with pseudoconvex and quasiconvex vector-valued data 
These results can be seen in the books \cite{jah11}, where the cone constrained case is considered, and \cite{cam05,ggt04}, where the problem with inequality constraints is studied.

In this paper, we  introduce a new more general vector problem with a cone constraint, whose objective function is pseudoconvex and the constraint is quasiconvex. We generalize some known sufficient optimality conditions to problems with more general data. We prove that a problem with quasiconvex objective function and strictly scalarly quasiconvex constraint function is pseudoconvex if and only if every Fritz John critical point is a weakly efficient global solution. 

In 1985 Martin \cite{mar85} defined Kuhn-Tucker invex scalar problems. He proved that these problems are the most general ones with the property that every Kuhn-Tucker stationary point is a global minimizer. Later this result were generalized to vector problems with inequality constraints, where the Paretian cone were applied (see Chapter 1 in the book \cite{ara2010} and the references therein). 
Several papers appeared, which concerned optimality conditions in multiobjective optimization with Kuhn-Tucker pseudoinvex data and Fritz John ones, but the authors considered the Paretian case in all of them. Nobody extended these results to the more complicated case with arbitrary cones. 
We define the widest class of Fr\'echet differentiable cone-constrained vector problems such that every Fritz John stationary point is a weakly efficient global solution. We prove that no further generalizations are possible. We  find the maximal extension of the problem with a cone constraint such that the necessary conditions are sufficient. We call such problem Fritz John pseudoinvex. We prove that a problem with a cone constraint is Fritz John pseudoinvex with respect to some cones $C$ and $K$ if and only if every vector critical point of Fritz John type is a global minimizer if and only if the problem is Fritz John pseudoinvex. 

The paper is organized as follows: In section \ref{s2} we prove that a quasiconvex vector-valued function with respect to some cone $C$ is pseudoconvex with respect to the same cone if and only if every vector critical point of Fritz John type is a weak global minimizer. This result is a generalization to vector-valued functions of a known claim due to Crouzeix and Ferland \cite{cro82}. 
In section \ref{s3}, we obtain sufficient optimality conditions for a new problem with pseudoconvex vector-valued objective function and quasiconvex cone constraints. We prove that a problem with a quasiconvex objective function and strictly scalarly quasiconvex constraint is Fritz John pseudoconvex if and only if each vector critical point of Fritz John type is a weakly efficient global solution. 
In section \ref{s4}, we extend the sufficient optimality conditions to problems with pseudoinvex data. We prove that this class is the widest one such that every vector critical point of Fritz John type is a weakly efficient global solution.

\section{Pseudoconvex and quasiconvex vector-valued functions}
\label{s2}
The necessary conditions for optimality of a pseudoconvex scalar function are also sufficient. This property is not satisfied for quasiconvex functions. For scalar functions, every pseudoconvex  Fr\'echet differentiable function is quasiconvex. The inverse claim holds under additional assumptions. Crouzeix and Ferland have proved \cite{cro82} that a quasiconvex Fr\'echet differentiable function is pseudoconvex if and only if every stationary point is a global minimizer.

Several definitions for pseudoconvex  and quasiconvex vector-valued functions are introduced in optimization. There are no clear relations between some of these classes of functions. In this section, we consider one definition for pseudoconvexity and one for quasiconvexity. We also extend a theorem by Crouzeix and Ferland to vector-valued functions.

Let $C\in\R^n$  be a closed convex cone with nonempty interior ${\rm int}(C)$, whose vertex is the origin. Denote by $a\cdot b$ the scalar product between the vectors $a\in\R^n$ and $b\in\R^n$, by $J f(x)$ the Jacobian matrix of a Fr\'echet differentiable vector-valued function $f$ at some point $x$.  Suppose that $\f$ is a given Fr\'echet differentiable function, defined on an open set $X\subset\R^s$. Denote the positive polar cone of $C$ by $C^*$, that is 
\[
C^*:=\{\lambda\in R^n\mid\lambda\cdot x\ge 0\textrm{ for all }x\in C\},
\]
 the positive polar cone of $C^*$ by $C^{**}$.  

In all results, we suppose that $C$ and $K$ are cones with a vertex at the origin $0$.

\begin{lemma}[\cite{ggt04}]\label{C**}
Let $C$ be a nonempty closed convex cone in the $n$-dimensional space $\R^n$, whose vertex is the origin. Then $C^{**}=C$.
\end{lemma}

\begin{definition}[\cite{ggt04}]
Let $f:X\to\R$ be a scalar function, defined on some convex set $X\in\R^s$. Then it is called
\begin{itemize}
\item
quasiconvex iff $f[x+t(y-x)]\le\max\{f(x),f(y)\}$ for all $x$, $y\in X$ and each $t\in[0,1]$ 
\item
strictly quasiconvex iff $f[x+t(y-x)]<\max\{f(x),f(y)\}$ for all $x$, $y\in X$ such that $y\ne x$ and each $t\in(0,1)$.
\end{itemize}
\end{definition}

\begin{definition}[\cite{ggt04}]
A Fr\'echet differentiable scalar function $f:X\to\R$ is called pseudoconvex on an open set $X\in\R^s$ iff the following implication is satisfied for all $x$, $y\in X$:
\[
f(y)<f(x)\quad\Rightarrow\quad\nabla f(x)(y-x)<0
\]
\end{definition}

\begin{definition}[\cite{cam05}]
A Fr\'echet differentiable vector-valued function $\f$ is called quasiconvex on the set $X\in\R^s$ with respect to the cone $C\in\R^n$ iff the following implication is satisfied for all $x$, $y\in X$:
\[
f(y)\in f(x)-{\rm int}(C)\quad\Rightarrow\quad J f(x)(y-x)\in - C.
\]
\end{definition}

\begin{definition}[\cite{cam05}]
A Fr\'echet differentiable vector-valued  function $\f$ is called pseudoconvex on the set $X\subset\R^s$ with respect to the cone $C\in\R^n$ iff the following implication is satisfied for all $x$, $y\in X$:
\[
f(y)\in f(x)-{\rm int}(C)\quad\Rightarrow\quad J f(x)(y-x)\in -{\rm int}(C).
\]
\end{definition}

It is obvious that every pseudoconvex function is quasiconvex. The converse does not hold.

We begin with some preliminary lemmas.

\begin{lemma}\label{lema1}
Let $C$ be a closed convex cone, and $x\notin C$. Then there exists $\lambda\in C^*$ such that $\lambda\cdot x<0$.
\end{lemma}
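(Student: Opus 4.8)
The plan is to obtain $\lambda$ as a separating functional and then upgrade it to an element of the polar cone by exploiting the cone structure. First I would invoke the Strong Separation Theorem: the singleton $\{x\}$ is compact, the cone $C$ is closed and convex, and the hypothesis $x\notin C$ makes these two sets disjoint. Hence there exist a nonzero vector $\lambda\in\R^n$ and a scalar $\alpha$ with
\[
\lambda\cdot x<\alpha\le\lambda\cdot y\quad\text{for every }y\in C.
\]

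Next I would use that $C$ is a cone with vertex at the origin, so $0\in C$. Evaluating the right-hand inequality at $y=0$ gives $0=\lambda\cdot 0\ge\alpha$, that is $\alpha\le 0$. Combined with $\lambda\cdot x<\alpha$, this immediately yields $\lambda\cdot x<0$, the desired strict inequality.

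It then remains to verify $\lambda\in C^*$, i.e. $\lambda\cdot y\ge 0$ for all $y\in C$; this is the step that genuinely uses the cone property rather than mere convexity. Fixing $y\in C$, the cone property gives $ty\in C$ for every $t>0$, so $t(\lambda\cdot y)=\lambda\cdot(ty)\ge\alpha$ for all $t>0$. Were $\lambda\cdot y$ negative, letting $t\to+\infty$ would drive the left-hand side to $-\infty$, contradicting the fixed lower bound $\alpha$. Hence $\lambda\cdot y\ge 0$, and since $y\in C$ was arbitrary, $\lambda\in C^*$.

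The main obstacle is conceptual rather than computational: the raw separating hyperplane need not pass through the origin, so the bare functional $\lambda$ is not automatically in $C^*$. The homogeneity argument in the third paragraph is precisely what removes the constant $\alpha$ and places $\lambda$ in the polar cone, while pinning down $\alpha\le 0$ through $0\in C$ is the small observation that turns the separation bound $\lambda\cdot x<\alpha$ into the clean conclusion $\lambda\cdot x<0$.
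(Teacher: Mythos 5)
Your proof is correct, but it takes a different route from the paper's. The paper's argument is a three-line contrapositive via the bipolar theorem: if $\lambda\cdot x\ge 0$ for all $\lambda\in C^*$, then $x\in C^{**}$, and since Lemma~1 of the paper (cited from Giorgi--Guerraggio--Thierfelder) gives $C^{**}=C$ for a closed convex cone, this contradicts $x\notin C$. You instead work directly from the Strong Separation Theorem, separating the point $x$ from the closed convex set $C$, using $0\in C$ to pin down $\lambda\cdot x<\alpha\le 0$, and using positive homogeneity ($ty\in C$ for all $t>0$, so $\lambda\cdot y<0$ would violate the lower bound $\alpha$ as $t\to+\infty$) to conclude $\lambda\in C^*$. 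In effect you have reproved the nontrivial inclusion $C^{**}\subseteq C$ of the bipolar theorem in the special case at hand, so your argument is more self-contained: it replaces the imported bipolar lemma by the separation theorem, which the paper in any case states and uses later (in the proof of Lemma~\ref{lema6}). What the paper's route buys is brevity, given that it treats $C^{**}=C$ as a black box; what yours buys is transparency about where the cone structure enters --- homogeneity kills the affine constant $\alpha$ and $0\in C$ forces $\alpha\le 0$ --- and it makes the lemma independent of Lemma~\ref{C**}. Both proofs are complete; your only implicit appeal is to $0\in C$, which is covered by the paper's standing convention that all cones have vertex at the origin.
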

\begin{proof}
Assume the contrary that $\lambda\cdot x\ge 0$ for all $\lambda\in C^*$. It follows from here that $x\in C^{**}$. On the other hand, by Lemma \ref{C**}, we have $C=C^{**}$, which contradicts the hypothesis $x\notin C$.
\end{proof}

\begin{lemma}\label{lema2}
Let $C\subset\R^n$ be a cone and $\lambda\in C^*$, $\lambda\ne 0$. Then $\lambda\cdot x>0$ for all $x\in{\rm int}(C)$ such that $x\ne 0$.
\end{lemma}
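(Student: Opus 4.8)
The plan is to argue by contradiction, exploiting that an interior point of $C$ can be perturbed slightly in any direction without leaving $C$. Since $x\in{\rm int}(C)\subseteq C$ and $\lambda\in C^*$, we immediately have $\lambda\cdot x\ge 0$; the whole content of the lemma is to upgrade this weak inequality to a strict one. The natural way to exclude the remaining case $\lambda\cdot x=0$ is to push $x$ in the direction $-\lambda$: this decreases the scalar product with $\lambda$ below zero, yet the perturbed point stays feasible, contradicting $\lambda\in C^*$.

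Concretely, first I would fix $\epsilon>0$ with the open ball $B(x,\epsilon)$ contained in $C$, which exists because $x\in{\rm int}(C)$. Assume, for contradiction, that $\lambda\cdot x=0$. Since $\lambda\ne 0$, choose $t>0$ small enough that $t\norm{\lambda}<\epsilon$ and set $z:=x-t\lambda$. Then $\norm{z-x}=t\norm{\lambda}<\epsilon$, so $z\in B(x,\epsilon)\subseteq C$, and hence $\lambda\cdot z\ge 0$ because $\lambda\in C^*$. On the other hand, $\lambda\cdot z=\lambda\cdot x-t\,(\lambda\cdot\lambda)=-t\norm{\lambda}^2<0$, a contradiction. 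Therefore $\lambda\cdot x>0$, as claimed.

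There is essentially no hard step here: the argument uses only the defining inequality of $C^*$ together with the openness of the interior, and it does not even require $C$ to be convex. The one point to keep an eye on is that the perturbation direction $-\lambda$ be genuinely nonzero, which is precisely guaranteed by the hypothesis $\lambda\ne 0$, since this is what makes $\norm{\lambda}^2>0$. The assumption $x\ne 0$ plays no active role in this particular argument; indeed, when $\lambda\ne 0$ lies in $C^*$ the cone $C$ cannot be all of $\R^n$, so the origin is never interior and the restriction $x\ne 0$ is automatically satisfied for every $x\in{\rm int}(C)$.
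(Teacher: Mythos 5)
Your proof is correct and follows essentially the same route as the paper: both reduce to the case $\lambda\cdot x=0$ and then perturb $x$ in the direction $-\lambda$, using that a small perturbation of an interior point stays in $C$, to derive the contradiction $-t\norm{\lambda}^2\ge 0$. Your explicit choice of the ball radius $\epsilon$ and your closing observations (that convexity of $C$ is not needed, and that $x\ne 0$ is automatic once $\lambda\ne 0$) are accurate refinements of the same argument.
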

\begin{proof}
Suppose the contrary that there exists $x\in {\rm int}(C)$ with $\lambda\cdot x\le 0$, $x\ne 0$. It follows from the definition of a positive polar cone that $\lambda\cdot x=0$. There exists a number $\delta>0$ such that $x-\delta\lambda\in{\rm int}(C)$, because $\lambda\in\R^n$. By $\lambda\in C^*$ we have
$\lambda\cdot(x-\delta\lambda)\ge 0$. We obtain from here that $\lambda=0$, which is a contradiction.
\end{proof}

\begin{lemma}\label{lema3}
Let $C$ be a closed convex cone and $x\in C$. Then  $x\in {\rm int}(C)$ if and only if $\lambda\cdot x>0$ for all $\lambda\in C^*$ with $\lambda\ne 0$.
\end{lemma}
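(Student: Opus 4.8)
The plan is to prove the two implications separately, obtaining the nontrivial one by contraposition. For necessity, assume $x\in{\rm int}(C)$. If $x\ne 0$, then Lemma~\ref{lema2} applies directly and gives $\lambda\cdot x>0$ for every $\lambda\in C^*$ with $\lambda\ne 0$, which is exactly the claim. The only degenerate situation is $x=0$: if the origin belongs to ${\rm int}(C)$, then $C$ contains a ball about $0$, and since $C$ is a cone this forces $C=\R^n$, whence $C^*=\{0\}$ and the required inequality holds vacuously. Thus the forward direction reduces to Lemma~\ref{lema2} together with this observation.

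For sufficiency I would argue contrapositively: assume $x\in C$ but $x\notin{\rm int}(C)$, and produce a nonzero $\lambda\in C^*$ with $\lambda\cdot x=0$, which contradicts the hypothesis that $\lambda\cdot x>0$ for all nonzero $\lambda\in C^*$. Since ${\rm int}(C)$ is a nonempty open convex set not containing $x$, a separating-hyperplane argument applied to the point $x$ and the set ${\rm int}(C)$ yields a vector $\lambda\ne 0$ with $\lambda\cdot y\ge\lambda\cdot x$ for every $y\in{\rm int}(C)$. Because $C$ is closed and convex with nonempty interior, $C=\overline{{\rm int}(C)}$, so by continuity this inequality extends to all $y\in C$; that is, $\lambda$ supports $C$ at its boundary point $x$.

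It then remains only to exploit the cone structure. Taking $y=0\in C$ gives $\lambda\cdot x\le 0$, while taking $y=2x\in C$ gives $2\,(\lambda\cdot x)\ge\lambda\cdot x$, i.e.\ $\lambda\cdot x\ge 0$; hence $\lambda\cdot x=0$. Combined with $\lambda\cdot y\ge\lambda\cdot x=0$ for all $y\in C$, this shows $\lambda\in C^*$ and $\lambda\ne 0$, so $\lambda$ is the functional sought. I expect the main obstacle to be the correct deployment of separation: one must separate $x$ from the \emph{open} set ${\rm int}(C)$ rather than from $C$ itself (a boundary point of $C$ admits no strict separation from $C$), and then justify passing to the closure via $C=\overline{{\rm int}(C)}$. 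Once the supporting functional is in hand, the cone identities $0\in C$ and $2x\in C$ make the remainder routine; note that this polarity reasoning is of the same nature as Lemma~\ref{C**}, which is already available.
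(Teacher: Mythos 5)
Your proof is correct under its tacit hypotheses but takes a genuinely different route from the paper's. The paper establishes the nontrivial direction by contradiction through a limiting argument: since $x\in C\setminus{\rm int}(C)$, there is a sequence $x_k\to x$ with $x_k\notin C$; Lemma~\ref{lema1} supplies $\lambda_k\in C^*$ with $\lambda_k\cdot x_k<0$, and after normalizing $\norm{\lambda_k}=1$ and passing to a convergent subsequence, closedness of $C^*$ yields a nonzero $\lambda_0\in C^*$ with $\lambda_0\cdot x\le 0$, contradicting the hypothesis. You instead produce the annihilating functional in a single step via a supporting hyperplane: separate $x$ from the open convex set ${\rm int}(C)$, extend the inequality to $C=\overline{{\rm int}(C)}$, and use $0\in C$ and $2x\in C$ to force $\lambda\cdot x=0$ and $\lambda\in C^*$. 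Your version is cleaner --- it avoids the normalization-and-compactness argument and the appeal to Lemma~\ref{lema1} --- and your explicit treatment of the degenerate point $x=0$ in the necessity direction is actually more careful than the paper's bare citation of Lemma~\ref{lema2}, whose statement excludes $x=0$.

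The one genuine flaw is your unjustified assertion that ${\rm int}(C)$ is nonempty. The lemma as stated assumes only that $C$ is a closed convex cone, and this generality is not idle: Lemma~\ref{lema7} applies Lemma~\ref{lema3} to the polar cone $C^*$, and the paper's closing example exhibits a cone with ${\rm int}(C)\ne\emptyset$ but ${\rm int}(C^*)=\emptyset$. When ${\rm int}(C)=\emptyset$, both your separation step and the identity $C=\overline{{\rm int}(C)}$ break down (the closure of the empty set is empty), whereas the paper's sequential argument goes through unchanged. The repair costs one line: a convex set in $\R^n$ with empty interior has a proper affine hull, which here, since $0\in C$, is a proper linear subspace $V\subsetneq\R^n$; any nonzero $\lambda\in V^{\perp}$ satisfies $\lambda\cdot y=0$ for all $y\in C$, hence $\lambda\in C^*$ and $\lambda\cdot x=0$, so the right-hand condition of the equivalence is unsatisfiable and both sides of the equivalence are false. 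With this degenerate case added, your proof is complete and valid in the lemma's full generality.
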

\begin{proof} 
Suppose that $\lambda\cdot x>0$ for all $\lambda\in C^*$ with $\lambda\ne 0$, but $x\notin {\rm int}(C)$.  It follows from $x\notin{\rm int}(C)$ that there exists an infinite sequence $x_k$, converging to $x$, such that $x_k\notin C$. It follows from Lemma \ref{lema1} that there exists $\lambda_k\in C^*$ such that $\lambda_k\cdot x_k<0$. We conclude from here that $\lambda_k\ne 0$. Without loss of generality we suppose that
$\norm{\lambda_k}=1$ for all positive integers $k$. Passing to a subsequence we could suppose that $\lambda_k$ converges to some point $\lambda_0\ne 0$. Taking the limits when $k\to +\infty$ we obtain that $\lambda_0\cdot x\le 0$. Since the polar cone is always closed, we conclude that $\lambda_0\in C^*$. We conclude from here that $\lambda_0\cdot x>0$, which is a contradiction.

The converse part of the proof follows from Lemma \ref{lema2}. 
\end{proof}

We consider the unconstrained minimization of vector-valued functions.

\begin{definition}
A point $x$ is called a weak global minimizer of $\f$ with respect to some cone $C\subset\R^n$ (or weakly efficient or weakly effective solution of the minimization problem) iff there is no $y\in C$ such that
$f(y)\in f(x)-{\rm int}(C)$.
\end{definition}

\begin{definition}
Let $\f$ be a given Fr\'echet differentiable vector-valued function. Then a point $x\in X$ is called critical for the function $\f$ with respect to some cone $C\subset\R^n$ iff there exists $\lambda\in C^*$ such that
\[
\lambda\cdot J f(x)=0,\;\lambda\ne 0.
\]
\end{definition}



It is well known that every weak local or global minimizer is a vector critical point.

The next theorem is a generalization to vector-valued functions of the condition for pseudoconvexity of a given quasiconvex function \cite{cro82}:

\begin{theorem}\label{th2}
Let $X\subset\R^n$ be a convex set. Suppose that $C$ is a closed convex cone and $f$ is a Fr\'echet differentiable quasiconvex vector-valued function, defined on $X$. Then $f$ is pseudoconvex  with respect to the cone $C$ if and only if every vector critical  point $x$ is a weak global minimizer of $f$  with respect to the cone $C$.
\end{theorem}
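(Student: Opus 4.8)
The plan is to prove the two implications separately, with the forward direction being routine and the converse carrying the real content.

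For the forward direction (pseudoconvexity implies every vector critical point is a weak global minimizer), I would argue by contradiction. Let $x$ be a vector critical point, so there is $\lambda\in C^*$, $\lambda\neq 0$, with $\lambda\cdot Jf(x)=0$, and suppose $x$ is not a weak global minimizer, i.e.\ there is $y\in X$ with $f(y)\in f(x)-{\rm int}(C)$. Pseudoconvexity then yields $Jf(x)(y-x)\in-{\rm int}(C)$, so $-Jf(x)(y-x)\in{\rm int}(C)$ and is nonzero. Applying Lemma \ref{lema2} to this interior vector gives $\lambda\cdot(-Jf(x)(y-x))>0$, whereas $\lambda\cdot Jf(x)(y-x)=(\lambda\cdot Jf(x))(y-x)=0$ because $x$ is critical. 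This contradiction finishes the direction.

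For the converse I would again argue by contradiction: assume every vector critical point is a weak global minimizer but $f$ is not pseudoconvex. Then there are $x,y\in X$ with $f(y)\in f(x)-{\rm int}(C)$ while $Jf(x)(y-x)\notin-{\rm int}(C)$; quasiconvexity still forces $Jf(x)(y-x)\in-C$. Hence $v:=-Jf(x)(y-x)$ lies in $C\setminus{\rm int}(C)$, so by the contrapositive of Lemma \ref{lema3} there exists $\lambda\in C^*$, $\lambda\neq0$, with $\lambda\cdot v\le0$; since $v\in C$ and $\lambda\in C^*$ also $\lambda\cdot v\ge0$, whence $\lambda\cdot Jf(x)(y-x)=0$.

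The main obstacle is that this only says the scalarized derivative vanishes in the single direction $y-x$, whereas to contradict the hypothesis I must exhibit an \emph{actual} vector critical point. I would overcome this with a perturbation argument, the vector analogue of how one upgrades a one-directional derivative to full stationarity. Because $f$ is continuous, $X$ is open and $f(x)-{\rm int}(C)$ is open, for every direction $d\in\R^s$ and all sufficiently small $\epsilon>0$ the point $y_\epsilon:=y+\epsilon d$ lies in $X$ and still satisfies $f(y_\epsilon)\in f(x)-{\rm int}(C)$. Quasiconvexity then gives $Jf(x)(y_\epsilon-x)\in-C$, i.e.\ $v-\epsilon Jf(x)d\in C$; pairing with $\lambda\in C^*$ and using $\lambda\cdot v=0$ yields $\epsilon\,(\lambda\cdot Jf(x))d\le0$, hence $(\lambda\cdot Jf(x))d\le0$ for every $d$. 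Replacing $d$ by $-d$ forces $(\lambda\cdot Jf(x))d=0$ for all $d$, so $\lambda\cdot Jf(x)=0$ and $x$ itself is a vector critical point. By hypothesis $x$ is then a weak global minimizer, contradicting $f(y)\in f(x)-{\rm int}(C)$. The degenerate case $C=\R^n$ (where $C^*=\{0\}$ and no critical points exist) is vacuous and can be set aside.
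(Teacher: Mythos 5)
Your proposal is correct and takes essentially the same route as the paper: the forward direction is identical (criticality plus Lemma \ref{lema2} applied to $Jf(x)(y-x)\in-{\rm int}(C)$), and the converse uses the same three ingredients — quasiconvexity to get $Jf(x)(y-x)\in-C$, Lemma \ref{lema3} to produce $\lambda\in C^*\setminus\{0\}$ with $\lambda\cdot Jf(x)(y-x)=0$, and a continuity perturbation of $y$ combined with quasiconvexity to upgrade this to $\lambda\cdot Jf(x)=0$. The only cosmetic difference is that the paper perturbs $y$ in the single direction $\lambda\cdot Jf(x)$, yielding $\delta\norm{\lambda\cdot Jf(x)}^2\le 0$ in one stroke, whereas you perturb in an arbitrary direction $d$ and vary $d$; both arguments conclude that $x$ is a vector critical point and reach the same contradiction.
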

\begin{proof}
Let $f$ be pseudoconvex. 
We prove that for every vector critical point $x$  is a weak global minimizer of $f$. Assume the contrary that $x$ is not a weak minimizer. Then there exists a point $y\in X$ such that $f(y)\in f(x)-{\rm int}(C)$. According to pseudoconvexity of $f$ we have that $J f(x)(y-x)\in -{\rm int}(C)$. Since $x$ is critical, then there exists $\lambda\in C^*\setminus\{0\}$ such that $\lambda\cdot J f(x)=0$. On the other hand,
it follows from $\lambda\in C^*\setminus\{0\}$,  by Lemma \ref{lema2}, that
$
\lambda\cdot J f(x)(y-x)<0,
$
which is a contradiction.

We prove the converse claim. Suppose that every vector critical  point is a weak global minimizer of $f$. We prove that $f$ is pseudoconvex. Let $x\in X$ and $y\in X$ be such that
\[
f(y)\in f(x)-{\rm int}(C).
\]
 It follows from here 
that $x$ is not a weak minimizer. Therefore, 
\[
\lambda\cdot J f(x)\ne 0\quad\textrm{for every}\quad \lambda\in C^*\setminus\{0\}.
\] 
It follows from $f$ is quasiconvex that $J f(x)(y-x)\in - C$. 
 We want to prove that 
 \[
J f(x)(y-x)\in -{\rm int}(C).
\]
 Assume the contrary, that is 
\[
J f(x)(y-x)\in -C\setminus {\rm int}(C).
\]
  Then by Lemma \ref{lema3} we obtain that there exists a multiplier 
\[
\lambda\in C^*\setminus\{0\}\quad\textrm{with}\quad \lambda\cdot J f(x)(y-x)=0.
\]
 Since $f$ is continuous, it follows from $f(y)\in f(x)-{\rm int}(C)$ that there exists $\delta>0$ with
\[
f(y+\delta\lambda\cdot Jf(x))\in f(x)-{\rm int}(C).
\]
 By quasiconvexity of $f$, we obtain that 
\[
Jf(x)(y+\delta\lambda\cdot Jf(x)-x)\in -C.
\]
 Using that $\lambda\in C^*$, we conclude that 
\[
\lambda\cdot Jf(x)(y-x)+\delta\norm{\lambda\cdot Jf(x)}^2\le 0.
\]
 Hence, by $\lambda\cdot Jf(x)(y-x)=0$, we have $\lambda\cdot Jf(x)=0$,  which contradicts our conclusion.
\end{proof}

\section{Cone-constrained pseudoconvex vector problems}
\label{s3}

Consider the multiobjective nonlinear programming problem

\bigskip
C-minimize $f(x)$ subject to $g(x)\in -K$,\hfill (P)
\bigskip

\noindent
where $\f$ and $\g$ are given differentiable vector-valued functions defined on some open set $X\subset\R^s$, $C$ and $K$ are given closed convex cones with a vertex at the origin. Denote by $S$ the feasible set, that is \[
S:=\{x\in X\mid g(x)\in -K\}.
\]

\begin{definition}
A feasible point $x$ is called weak global minimizer iff there is no another feasible point $y$ such that $f(y)\in f(x)-{\rm int}(C)$.
\end{definition}

The following necessary conditions for optimality of Fritz John type are known (for example, see \cite{jah11}):

\begin{FJ}
Let $x^0$ be a weakly effective solution of the problem {\rm (P)} and the cones $C$ and $K$ have nonempty interior. Moreover, suppose that $f$ and $g$ are Fr\'echet differentiable at $x^0$. Then there exist vectors $\lambda^0\in C^*$ and $\mu^0\in K^*$ such that $(\lambda^0,\mu^0)\ne (0,0)$ and
\begin{equation}\label{1}
\lambda^0\cdot J f(x^0)+\mu^0\cdot J g(x^0)=0,\quad \mu^0\cdot g(x^0)=0.
\end{equation}
\end{FJ}

\begin{definition}
Every point $x^0$, which satisfies these necessary conditions is called a Fritz John stationary (or critical) point.
\end{definition}

In Ref. \cite{cam05} are given assumptions, which ensure that Conditions (\ref{1}) are sufficient for a given point to be a weakly efficient solution of (P). It is supposed in them that the objective function is pseudoconvex and the constraint function is quasiconvex in some sense. We define another notion of pseudoconvexity-quasiconvexity and derive more general sufficient conditions.

\begin{definition}
A function $\g$ is called scalarly quasiconvex on the convex set $X\in\R^s$ with respect to the cone $K$ iff the $\mu\cdot g(x)$ is a quasiconvex  scalar function of its argument $x$ for every $\mu\in K^*$.
\end{definition}

The scalar quasiconvexity reduce to the condition that all component are quasiconvex functions in the case when $K$ is the Paretian cone 
\[
\R^m_+=\{x=(x_1,x_2,\dots,x_m)\in\R^m\mid x_i\ge 0,\quad i=1,2,\dots, m\}.
\]

For every feasible point $x$ for the problem (P) denote by $M^*(x)$ the cone
\[
M^*(x):=\{\mu\in K^*\mid \mu\cdot g(x)=0,\}
\]
where $K^*$ is the positive polar cone of $K$. Suppose that $M^{**}(x)$ is the positive polar cone of $M^*(x)$.

We consider also the class of constraint functions, which satisfy the following implication:

\medskip
$g(x)\in -K,\; g(y)\in -K\quad$ imply $\quad J g(x)(y-x)\in -M^{**}(x)$.\hfill (QC)
\medskip

\begin{proposition} 
Let the function $\g$ be scalarly quasiconvex on the convex set $X\in\R^s$ with respect to the cone $K$. Then it satisfies the implication {\rm (QC)}.
\end{proposition}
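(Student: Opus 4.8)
The plan is to unwind the definition of the polar cone and reduce the desired membership to a family of scalar inequalities, one for each multiplier in $M^*(x)$. By definition, $M^{**}(x)$ is the positive polar cone of $M^*(x)$, so the conclusion $Jg(x)(y-x)\in -M^{**}(x)$ is equivalent to $-Jg(x)(y-x)\in M^{**}(x)$, which in turn means
\[
\mu\cdot Jg(x)(y-x)\le 0\quad\textrm{for every }\mu\in M^*(x).
\]
Thus I would fix an arbitrary $\mu\in M^*(x)$ and set out to prove this single inequality; the membership then follows at once because $\mu$ was arbitrary.

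For each such $\mu$ I would pass to the scalar function $h(z):=\mu\cdot g(z)$, which is Fr\'echet differentiable with $\nabla h(x)=\mu\cdot Jg(x)$, and which is quasiconvex by the scalar quasiconvexity hypothesis since $\mu\in M^*(x)\subset K^*$. The engine of the proof is the first-order inequality for differentiable quasiconvex scalar functions: if $h(y)\le h(x)$, then $\nabla h(x)(y-x)\le 0$. This needs no external citation; it follows directly from the definition, because quasiconvexity gives $h(x+t(y-x))\le\max\{h(x),h(y)\}=h(x)$ for $t\in(0,1)$, so that the difference quotient $[h(x+t(y-x))-h(x)]/t$ is nonpositive, and letting $t\to 0^+$ yields $\nabla h(x)(y-x)\le 0$.

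It then remains to verify the hypothesis $h(y)\le h(x)$ from the data. Since $\mu\in M^*(x)\subset K^*$ and $g(y)\in -K$, we have $\mu\cdot g(y)\le 0$, that is $h(y)\le 0$; and since $\mu\in M^*(x)$ satisfies $\mu\cdot g(x)=0$, we have $h(x)=0$. Hence $h(y)\le 0=h(x)$, the first-order inequality applies, and $\mu\cdot Jg(x)(y-x)=\nabla h(x)(y-x)\le 0$, as required.

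I do not expect a genuine obstacle here: the proposition is essentially a bookkeeping argument combining the definition of the polar cone with the elementary first-order property of scalar quasiconvex functions. The only points demanding care are the sign conventions—namely that $g(y)\in -K$ together with $\mu\in K^*$ forces $\mu\cdot g(y)\le 0$, and that membership in $-M^{**}(x)$ translates into $\mu\cdot Jg(x)(y-x)\le 0$ rather than its reverse. It is worth noting that the argument uses $g(y)\in -K$ and the relation $\mu\cdot g(x)=0$ built into the definition of $M^*(x)$, so the hypothesis $g(x)\in -K$ enters only to guarantee that $x$ is a feasible point at which $M^*(x)$ is considered.
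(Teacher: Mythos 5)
Your proof is correct and is essentially the paper's own argument: the paper likewise fixes $\mu\in M^*(x)$, notes $\mu\cdot g(y)\le 0=\mu\cdot g(x)$ from $g(y)\in -K$ and the definition of $M^*(x)$, derives $\mu\cdot g(x+t(y-x))\le\mu\cdot g(x)$ by scalar quasiconvexity, and passes to the derivative to get $\mu\cdot Jg(x)(y-x)\le 0$, concluding $Jg(x)(y-x)\in -M^{**}(x)$. Your version merely makes explicit two steps the paper leaves implicit (the difference-quotient limit and the unwinding of the polar-cone membership), which is fine.
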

\begin{proof}
Let $x$ and $y$ be arbitrary feasible points. By scalar quasiconvexity of $g$ we obtain that the scalar function $\mu\cdot g$ is quasiconvex for every $\mu\in K^*$. Because of $g(y)\in -K$ and $\mu\in M^*(x)$, we obtain that 
\[
\mu\cdot g(y)\le 0=\mu \cdot g(x).
\]
Hence,
\[
\mu\cdot g(x+t(y-x))\le\mu\cdot g(x)\quad\textrm{for all}\quad t\in[0,1]
\]
and
\[
\mu\cdot J g(x)(y-x)\le 0\quad\textrm{for every}\quad \mu\in M^*(x).
\]
 Therefore $Jg(x)(y-x)\in -M^{**}(x)$.
\end{proof}

We introduce the following definition, which is an extension of the notion Fritz John pseudoconvex scalar problems due to Ivanov \cite{JOGO-1}.

\begin{definition} 
We call the problem {\rm (P)} with  Fr\'echet differentiable data  Fritz John pseudoconvex (in short, FJ-pseudoconvex) iff  for all points $x\in X$  and $y\in X$ is satisfied the following implication:
\begin{equation}\label{2}
\left.
\begin{array}{l}
f(y)\in f(x)-{\rm int}(C) \\
g(x)\in -K,\; g(y)\in -K
\end{array}\right]
\quad\Rightarrow \quad
\left[
\begin{array}{l}
J f(x)(y-x)\in -{\rm int}(C^{**})\\
J g(x)(y-x)\in -{\rm int}(M^{**}(x)).
\end{array}
\right.
\end{equation}
\end{definition}

\begin{remark}
In the case when $K$ is the positive orthant in the space $\R^m$, then $\mu\cdot g(x)=0$ implies that $\mu_j g_j(x)=0$ for all $j=1,2,\dots ,m$. Therefore, $\mu_j=0$ for all constraints $j$, which are not active. Hence
\[
J g(x)(y-x)\in -{\rm int}(M^{**}(x))\quad\textrm{implies that}\quad \nabla g_j(x)(y-x)<0
\]
 for all active constraints $j$. If $C=\R^n_+$, then
\[
Jf(x)(y-x)\in -{\rm int}(C^{**})\quad\textrm{implies that}\quad \nabla f_i(x)(y-x)<0\quad\textrm{for all}\quad i=1,2,\dots,n.
\]
\end{remark}

Denote by $C\times K$ the Cartesian product of the cones $C$ and $K$.


\begin{lemma}\label{lema7}
Let $C$ be a closed convex cone 
 and $\lambda\in C^*$. Then  $\lambda\in {\rm int}(C^*)$ if and only if $\lambda\cdot x>0$ for all $x\in C$, $x\ne 0$.
\end{lemma}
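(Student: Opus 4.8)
The plan is to recognize that this lemma is exactly the dual counterpart of Lemma \ref{lema3}, and to obtain it by applying that lemma to the polar cone $C^*$ in place of $C$. The whole argument therefore reduces to checking that $C^*$ satisfies the hypotheses of Lemma \ref{lema3} and that its own positive polar cone is $C$ itself.

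First I would record that the positive polar cone $C^*$ is again a closed convex cone with vertex at the origin. This is a standard property: $C^*=\bigcap_{x\in C}\{\lambda\in\R^n\mid\lambda\cdot x\ge 0\}$ is an intersection of closed half-spaces through the origin, hence closed and convex, and it is clearly invariant under multiplication by nonnegative scalars. Consequently $C^*$ is an admissible cone to which Lemma \ref{lema3} may be applied. Next, since $C$ is a nonempty closed convex cone, Lemma \ref{C**} gives $C^{**}=C$; that is, the positive polar cone of $C^*$ is precisely $C$.

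Now I would invoke Lemma \ref{lema3} with the cone $C$ there replaced by $C^*$ and the point $x$ there replaced by the given $\lambda$. Because $\lambda\in C^*$ by hypothesis, Lemma \ref{lema3} yields the equivalence: $\lambda\in{\rm int}(C^*)$ if and only if $\mu\cdot\lambda>0$ for all $\mu\in (C^*)^*$ with $\mu\ne 0$. Substituting $(C^*)^*=C^{**}=C$ and using the symmetry of the scalar product, $\mu\cdot\lambda=\lambda\cdot\mu$, this reads exactly as $\lambda\in{\rm int}(C^*)$ if and only if $\lambda\cdot x>0$ for all $x\in C$ with $x\ne 0$, which is the assertion of the lemma.

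I do not expect any genuine obstacle here: the only nontrivial ingredient is the biduality identity $(C^*)^*=C$, which is already available as Lemma \ref{C**}, and the verification that $C^*$ is a closed convex cone, which is immediate from its definition. If one preferred a self-contained proof not routed through Lemma \ref{lema3}, the same two directions could be argued directly — the nontrivial implication being a separation argument producing, for $\lambda\notin{\rm int}(C^*)$, a nonzero $x\in C$ with $\lambda\cdot x\le 0$ — but since Lemma \ref{lema3} and Lemma \ref{C**} are already established, the duality shortcut is the cleanest route.
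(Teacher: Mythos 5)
Your proof is correct and is essentially the same as the paper's: the paper also deduces Lemma \ref{lema7} by applying Lemma \ref{lema3} to the polar cone $C^*$ and invoking the biduality $C^{**}=C$ from Lemma \ref{C**}. Your write-up is, if anything, slightly more careful, since you explicitly verify that $C^*$ is a closed convex cone before applying Lemma \ref{lema3}, a hypothesis the paper leaves implicit.
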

\begin{proof}
Let $\lambda\in {\rm int}(C^*)$. According to Lemma \ref{lema3} we have $\lambda\cdot x>0$ for all $x\in C^{**}$, $x\ne 0$. Then the claim follows from the relation $C^{**}=C$, because $C$ is closed and convex.

Conversely, suppose that $\lambda\cdot x>0$ for all $x\in C$, $x\ne 0$. Then, by $C=C^{**}$ we have $\lambda\cdot x>0$ for all $x\in C^{**}$,  $x\ne 0$. Thus, the claim follows from Lemma \ref{lema3}. 
\end{proof}
\begin{lemma}\label{lema9}
Let $C$ be a closed convex cone, $\lambda\in {\rm int}(C^*)$, $\lambda\ne0$, $x\in C$, $x\ne 0$. Then $\lambda\cdot x>0$.
\end{lemma}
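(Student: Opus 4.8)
The plan is to recognize this statement as an immediate specialization of the forward implication in Lemma \ref{lema7}. That lemma characterizes the interior of the polar cone: for $\lambda\in C^*$, one has $\lambda\in{\rm int}(C^*)$ if and only if $\lambda\cdot x>0$ for every $x\in C$ with $x\ne 0$. The present hypothesis $\lambda\in{\rm int}(C^*)$ already forces $\lambda\in C^*$, since the interior of a set is contained in the set, so Lemma \ref{lema7} applies verbatim.

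Concretely, I would first note that ${\rm int}(C^*)\subseteq C^*$, so the standing hypothesis of Lemma \ref{lema7} is fulfilled. Invoking the ``only if'' direction of that equivalence, the membership $\lambda\in{\rm int}(C^*)$ yields $\lambda\cdot x>0$ for all $x\in C$ with $x\ne 0$. The point $x$ supplied in the statement satisfies $x\in C$ and $x\ne 0$, and hence $\lambda\cdot x>0$, which is exactly the desired conclusion. (The explicit assumption $\lambda\ne 0$ plays no role beyond what ${\rm int}(C^*)$ already encodes.)

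Since the whole argument is a one-line appeal to an already established lemma, I do not anticipate any genuine obstacle; the only thing to check is that the stated hypotheses imply those of Lemma \ref{lema7}, which is immediate from ${\rm int}(C^*)\subseteq C^*$. If one instead prefers a self-contained derivation that bypasses Lemma \ref{lema7}, the same conclusion follows by applying Lemma \ref{lema3} directly to the closed convex cone $C^*$: this gives $\lambda\cdot x>0$ for all $x\in C^{**}$ with $x\ne 0$, and then Lemma \ref{C**} identifies $C^{**}$ with $C$, recovering the statement.
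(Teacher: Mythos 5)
Your proof is correct, and there is no circularity in it: Lemma \ref{lema7} precedes Lemma \ref{lema9} in the paper and is proved without reference to it, so Lemma \ref{lema9} really is a verbatim specialization of the forward implication of Lemma \ref{lema7}, once one notes ${\rm int}(C^*)\subseteq C^*$ as you do. The paper, however, takes a slightly different one-line route: it applies Lemma \ref{lema2} to the cone $C^*$ with the roles of the two vectors exchanged --- since $x\in C=C^{**}=(C^*)^*$ and $x\ne 0$, Lemma \ref{lema2} (with $x$ playing the role of the polar functional and $\lambda$ the interior point) yields $\lambda\cdot x>0$. So the paper reaches down to the more elementary Lemma \ref{lema2} and bypasses Lemma \ref{lema7} entirely, whereas you reuse Lemma \ref{lema7}, whose forward half the paper itself obtained from Lemma \ref{lema3} together with $C^{**}=C$; your fallback self-contained argument (Lemma \ref{lema3} plus Lemma \ref{C**}) is precisely that derivation, so the two routes are interchangeable dualizations resting on the bipolar identity. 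One genuine difference worth noting: your observation that the hypothesis $\lambda\ne 0$ is redundant is accurate for your route (Lemma \ref{lema7} never uses it, and in any case $x\in C$, $x\ne 0$ forces $C\ne\{0\}$, whence $0\notin{\rm int}(C^*)$ by Lemma \ref{lema4}), but the paper's route does invoke it, since Lemma \ref{lema2} requires the interior point to be nonzero --- the redundant hypothesis in the statement is an artifact of the paper's choice of proof, and your argument shows it can be dropped.
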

\begin{proof}
The claim follows from Lemma \ref{lema2} taking into account that $C=C^{**}$ and therefore $x\in C^{**}$.
\end{proof}

\begin{lemma}\label{lema4}
Let $C\subset\R^n$ be an arbitrary closed convex cone with a vertex at the origin. Then $0\notin{\rm int}(C^*)$ if and only if $C\ne\{0\}$.
\end{lemma}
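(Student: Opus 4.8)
The plan is to prove the logically equivalent biconditional $C=\{0\}$ if and only if $0\in{\rm int}(C^*)$, and then obtain the stated lemma by negating both sides. The key enabling observation, which I would record first, is that $0\in C^*$ always holds: indeed $0\cdot x=0\ge 0$ for every $x\in C$, so the zero vector is an admissible multiplier. This is exactly what allows me to feed $\lambda=0$ into the preceding lemmas.

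The cleanest route is a single application of Lemma \ref{lema7} to the multiplier $\lambda=0$. That lemma asserts, for $\lambda\in C^*$, that $\lambda\in{\rm int}(C^*)$ if and only if $\lambda\cdot x>0$ for all $x\in C$ with $x\ne0$. Taking $\lambda=0$, the right-hand condition becomes ``$0>0$ for all $x\in C$ with $x\ne0$,'' which is vacuously true precisely when $C$ has no nonzero element, i.e.\ when $C=\{0\}$, and false otherwise. Hence $0\in{\rm int}(C^*)\Leftrightarrow C=\{0\}$, and negating this equivalence yields $0\notin{\rm int}(C^*)\Leftrightarrow C\ne\{0\}$. As a self-contained fallback for the implication $C=\{0\}\Rightarrow 0\in{\rm int}(C^*)$, I could instead compute the polar directly: when $C=\{0\}$ the defining inequality $\lambda\cdot x\ge0$ for $x\in C$ reduces to the vacuous $\lambda\cdot 0\ge0$, so $C^*=\R^n$, whence ${\rm int}(C^*)=\R^n\ni 0$.

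I do not expect a genuine obstacle here, since the result is essentially the boundary (degenerate-multiplier) instance of Lemma \ref{lema7}. The only points requiring a little care are verifying that $0$ always lies in $C^*$ so that Lemma \ref{lema7} is legitimately applicable at $\lambda=0$, and computing the polar of the trivial cone correctly by noting that its defining constraint is vacuous and therefore $C^*$ is the whole space $\R^n$. Both are routine, and the argument does not even require $C$ to have nonempty interior.
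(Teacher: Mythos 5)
Your proof is correct, but it takes a genuinely different route from the paper's. You obtain the lemma as the degenerate instance $\lambda=0$ of Lemma~\ref{lema7}: since $0\in C^*$ always, Lemma~\ref{lema7} says $0\in{\rm int}(C^*)$ iff ``$0\cdot x>0$ for all $x\in C$, $x\ne 0$,'' which holds vacuously iff $C=\{0\}$; negating gives the statement. The paper instead argues directly: if $0\in{\rm int}(C^*)$, a ball about the origin lies in $C^*$, and since $C^*$ is a cone, scaling any nonzero $x$ into that ball shows $C^*=\R^n$, whence $C=C^{**}=\{0\}$ by Lemma~\ref{C**}; the converse is your own fallback computation $C=\{0\}\Rightarrow C^*=\R^n$. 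There is no circularity in your reduction --- Lemma~\ref{lema7} precedes Lemma~\ref{lema4} and its proof uses only Lemma~\ref{lema3} and $C^{**}=C$ --- so as a formal deduction from the stated lemmas your argument stands, and it is shorter. One subtlety worth noting, though: the ``interior implies positivity'' direction of Lemma~\ref{lema7} at the degenerate point $\lambda=0$ is exactly what your argument leans on, and if you trace the paper's proof chain (Lemma~\ref{lema2} $\to$ Lemma~\ref{lema3} $\to$ Lemma~\ref{lema7}), Lemma~\ref{lema2} carries a nonzero restriction, so that degenerate instance is not literally covered by the written proofs; verifying it independently amounts to the same scaling-plus-bipolar argument the paper uses to prove Lemma~\ref{lema4} directly. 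So your proof is valid against the lemma statements as written, but the paper's direct argument is what actually secures the content at $\lambda=0$; your self-contained fallback ($C=\{0\}\Rightarrow C^*=\R^n$) covers only the easy direction.
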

\begin{proof}
Suppose that $C\ne\{0\}$. Assume the contrary that $0\in{\rm int}(C^*)$. Therefore, there exists $\delta>0$ such that $\norm{\lambda}<\delta$ implies that $\lambda\in C^*$. Let us take an arbitrary point $x\in\R^n$, $x\ne 0$. Then 
$(\delta/2)\, x/\norm{x}$ belongs to a neigbourhood of the origin with a radius $\delta$. Since $C^*$ is a cone, then we conclude that $x\in C^*$. 
Therefore $C^*$ coincides with the whole space. Hence, $C=C^{**}=\{0\}$, which is a contradiction.

Suppose that $0\notin{\rm int}(C^*)$, but $C=\{0\}$. Therefore, $C^*\equiv\R^n$ and $0\in{\rm int}(C^*)$, a contradiction.
\end{proof}

\begin{corollary}\label{lema8}
Let $C\subset\R^n$ be an arbitrary closed convex cone with a vertex at the origin such that $C\ne\{0\}$. Let $\lambda\in{\rm int}(C^*)$. Then $\lambda\ne 0$.
\end{corollary}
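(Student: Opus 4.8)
The plan is to obtain this as an immediate consequence of Lemma \ref{lema4}, which I may assume. That lemma records the equivalence $0\notin{\rm int}(C^*)$ if and only if $C\ne\{0\}$, for any closed convex cone $C$ with vertex at the origin. The corollary is simply the contrapositive-style reading of one direction of that equivalence, applied to the particular element $\lambda$.

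Concretely, I would argue as follows. By hypothesis $C\ne\{0\}$, so the right-hand side of the equivalence in Lemma \ref{lema4} holds. Invoking that lemma, I conclude $0\notin{\rm int}(C^*)$. On the other hand, the hypothesis gives $\lambda\in{\rm int}(C^*)$. Since $\lambda$ lies in a set from which the origin is excluded, $\lambda$ cannot coincide with $0$, and therefore $\lambda\ne 0$, as required.

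I do not anticipate any genuine obstacle here: the entire content has already been packaged into Lemma \ref{lema4}, so the corollary is a one-step specialization. The only thing to be careful about is the direction of the implication one reads off from the lemma; I would make sure to use precisely the implication ``$C\ne\{0\}\Rightarrow 0\notin{\rm int}(C^*)$'' and then compare the given interior point $\lambda$ against the excluded origin. No appeal to the polarity lemmas (Lemma \ref{lema1}, Lemma \ref{lema2}, Lemma \ref{lema3}) is needed beyond what is already absorbed into the proof of Lemma \ref{lema4}.
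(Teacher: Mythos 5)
Your proof is correct and is exactly the argument the paper intends: the corollary is stated immediately after Lemma \ref{lema4} with no separate proof, precisely because it follows from the implication $C\ne\{0\}\Rightarrow 0\notin{\rm int}(C^*)$ combined with $\lambda\in{\rm int}(C^*)$. Nothing is missing.
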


\begin{lemma}\label{lema5}
Let $C\subset\R^n$ and $K\subset\R^m$ be arbitrary closed convex cones  with verteces at the origin of the respective space. Suppose that the positive polars of the cones $C$ and $K$ have nonempty interior. 
Then 
\[
{\rm int}(C^*)\times {\rm int}(K^*)\subset {\rm int} (C\times K)^*.
\]
\end{lemma}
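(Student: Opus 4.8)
The plan is to recognise the right-hand side as the interior of the polar of the product cone and to apply Lemma \ref{lema7} twice. First I would observe that $C\times K$ is itself a nonempty closed convex cone with vertex at the origin of $\R^n\times\R^m=\R^{n+m}$, since a Cartesian product of closed convex cones with vertex at the origin inherits all of these properties. Consequently Lemma \ref{lema7} is applicable to $C\times K$, and the target conclusion $(\lambda,\mu)\in{\rm int}(C\times K)^*$ reduces to verifying the strict-positivity condition $(\lambda,\mu)\cdot(x,y)>0$ for every $(x,y)\in C\times K$ with $(x,y)\ne(0,0)$.

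Next I would fix $\lambda\in{\rm int}(C^*)$ and $\mu\in{\rm int}(K^*)$ and check the membership hypothesis of Lemma \ref{lema7} for the pair $(\lambda,\mu)$. Since ${\rm int}(C^*)\subset C^*$ and ${\rm int}(K^*)\subset K^*$, for any $(x,y)\in C\times K$ we have $(\lambda,\mu)\cdot(x,y)=\lambda\cdot x+\mu\cdot y\ge 0$, so $(\lambda,\mu)\in(C\times K)^*$; this is the only part of the polar identity $(C\times K)^*=C^*\times K^*$ that the argument actually uses. Applying Lemma \ref{lema7} separately to $C$ and to $K$, the assumptions $\lambda\in{\rm int}(C^*)$ and $\mu\in{\rm int}(K^*)$ give $\lambda\cdot x>0$ for every $x\in C$ with $x\ne 0$, and $\mu\cdot y>0$ for every $y\in K$ with $y\ne 0$.

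The key step is then to combine these two strict inequalities. I would take any $(x,y)\in C\times K$ with $(x,y)\ne(0,0)$. Because $\lambda\in C^*$ and $\mu\in K^*$, we always have $\lambda\cdot x\ge 0$ and $\mu\cdot y\ge 0$; and since at least one of $x$, $y$ is nonzero, at least one of these two scalar products is strictly positive by the previous paragraph. Hence $(\lambda,\mu)\cdot(x,y)=\lambda\cdot x+\mu\cdot y>0$. By Lemma \ref{lema7} applied to the closed convex cone $C\times K$, this yields $(\lambda,\mu)\in{\rm int}(C\times K)^*$, which is exactly the asserted inclusion.

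The main obstacle, such as it is, is the bookkeeping at the zero component: one must not claim strict positivity of $\lambda\cdot x$ when $x=0$, but instead rely on the bound $\lambda\cdot x\ge 0$ there and transfer the strictness to whichever factor is nonzero. The hypothesis that $C^*$ and $K^*$ have nonempty interior is needed only to make the left-hand side nonempty, so that the statement is not vacuous; the degenerate cases $C=\{0\}$ or $K=\{0\}$, in which the corresponding factor of $(x,y)$ is forced to be $0$, are absorbed automatically by this argument.
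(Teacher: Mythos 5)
Your proof is correct and takes essentially the same route as the paper's: check $(\lambda,\mu)\in(C\times K)^*$, obtain strict positivity of the scalar product on whichever component of $(x,y)\ne(0,0)$ is nonzero while using mere nonnegativity on the other, and conclude via the backward direction of Lemma \ref{lema7} applied to the product cone. The only (cosmetic) difference is that by invoking the forward direction of Lemma \ref{lema7} on each factor, rather than Corollary \ref{lema8} together with Lemma \ref{lema9}, you absorb the degenerate cases $C=\{0\}$ or $K=\{0\}$ into one uniform argument, whereas the paper disposes of them by a separate case analysis.
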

\begin{proof}
Let $C\ne\{0\}$ and $K\ne\{0\}$. Suppose that $(\lambda,\mu)\in C^*\times K^*$ is an arbitrary point such that $\lambda\in{\rm int}(C^*)$ and $\mu\in{\rm int}(K^*)$. Therefore, by Corollary \ref{lema8}, $\lambda\ne 0$ and $\mu\ne 0$.  Choose any point $(a,b)\in C\times K$, $a\in C$, $b\in K$, $(a,b)\ne (0,0)$. Then at least one of the numbers $a$ and $b$ is different from zero. Without loss of generality we could suppose that $a\ne 0$. Then it follows from Lemma \ref{lema9} that  $\lambda\cdot a>0$. On the other hand we have $\mu\cdot b\ge 0$.  Therefore, $\lambda\cdot a+\mu\cdot b> 0$.  Since $a$ and $b$ are arbitrary points from $C$ and $K$, respectively, then by Lemma \ref{lema7}, we obtain that $(\lambda,\mu)\in {\rm int}(C\times K)^*$.

It is easy to see that the claim also holds if $C\ne\{0\}$ and $K=\{0\}$ repeating approximately the same proof. In this case, $b=0$, $\mu\in\R^m$ is an arbitrary point and $\lambda\cdot a+\mu\cdot b>0$. Similarly, the claim also is satisfied if $C=\{0\}$ and $K\ne\{0\}$. The case $C=\{0\}$ and $K=\{0\}$ is trivial.
\end{proof}

\begin{theorem}\label{th3}
Let the problem {\rm (P)} be  Fr\'echet differentiable and FJ-pseudoconvex. Suppose that $x$ is
a Fritz John stationary point. Then $x$ is a weakly efficient global solution.
\end{theorem}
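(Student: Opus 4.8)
The plan is to argue by contradiction. Suppose $x$ is a Fritz John stationary point but fails to be a weakly efficient global solution. Then $x$ is feasible, so $g(x)\in -K$, and there is a feasible point $y$ with $f(y)\in f(x)-{\rm int}(C)$ and $g(y)\in -K$. These are precisely the premises on the left-hand side of the defining implication (\ref{2}) of FJ-pseudoconvexity, so I would immediately conclude
\[
Jf(x)(y-x)\in-{\rm int}(C^{**}),\qquad Jg(x)(y-x)\in-{\rm int}(M^{**}(x)).
\]

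Next I would bring in the stationarity data: there exist $\lambda^0\in C^*$ and $\mu^0\in K^*$ with $(\lambda^0,\mu^0)\ne(0,0)$, $\mu^0\cdot g(x)=0$, and $\lambda^0\cdot Jf(x)+\mu^0\cdot Jg(x)=0$. Applying the last identity to the vector $y-x$ gives
\[
\lambda^0\cdot Jf(x)(y-x)+\mu^0\cdot Jg(x)(y-x)=0,
\]
and the whole point is to show the left-hand side is in fact strictly negative. I would treat the two summands separately. For the objective term, Lemma \ref{C**} gives $C^{**}=C$, so $-Jf(x)(y-x)\in{\rm int}(C)$; if $\lambda^0\ne 0$ then Lemma \ref{lema2} yields $\lambda^0\cdot Jf(x)(y-x)<0$, and if $\lambda^0=0$ the term is simply zero. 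For the constraint term the key observation is that $M^*(x)=K^*\cap\{\mu\mid\mu\cdot g(x)=0\}$ is a closed convex cone, so by Lemma \ref{C**} its bipolar returns itself; hence the polar of $M^{**}(x)$ is again $M^*(x)$, and the conditions $\mu^0\in K^*$, $\mu^0\cdot g(x)=0$ place $\mu^0$ in $(M^{**}(x))^*$. Combining $-Jg(x)(y-x)\in{\rm int}(M^{**}(x))$ with Lemma \ref{lema2} then gives $\mu^0\cdot Jg(x)(y-x)<0$ when $\mu^0\ne 0$, while the term vanishes when $\mu^0=0$.

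To finish, I would invoke $(\lambda^0,\mu^0)\ne(0,0)$: at least one multiplier is nonzero, so at least one summand is strictly negative and the other is $\le 0$, making their sum strictly negative and contradicting the displayed identity. This contradiction shows no such $y$ can exist, i.e. $x$ is a weakly efficient global solution.

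The main obstacle I anticipate is not the contradiction itself but the two bookkeeping points required to apply Lemma \ref{lema2}. The first is verifying that $\mu^0$ lies in the polar of $M^{**}(x)$, which hinges on recognizing $M^*(x)$ as a closed convex cone and then using the bipolar identity $M^*(x)^{**}=M^*(x)$ of Lemma \ref{C**}. The second is confirming that the vectors fed into Lemma \ref{lema2} are genuinely nonzero, since that lemma excludes the origin; this is automatic, because if, say, $\lambda^0\ne 0$ then $C^*\ne\{0\}$, forcing $C\ne\R^n$ and hence $0\notin{\rm int}(C)$, so $-Jf(x)(y-x)\in{\rm int}(C)$ cannot be the origin, and the identical reasoning handles the constraint cone $M^{**}(x)$.
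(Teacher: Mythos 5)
Your proof is correct, and it reaches the contradiction by a genuinely different route than the paper at the decisive step. Both arguments are identical up to the point where one must show $\lambda^0\cdot Jf(x)(y-x)+\mu^0\cdot Jg(x)(y-x)<0$. The paper gets strict negativity \emph{jointly}: it invokes Lemma \ref{lema5} to embed ${\rm int}(C^{**})\times{\rm int}(M^{**}(x))$ into ${\rm int}(C^*\times M^*(x))^*$, then applies Lemma \ref{lema9} once to the product cone $C^*\times M^*(x)$, where the nonzero vector needed by that lemma is exactly the given pair $(\lambda^0,\mu^0)\ne(0,0)$ --- so no case analysis is required. You instead work \emph{termwise}: each summand is $\le 0$, and whichever multiplier is nonzero yields a strictly negative summand via Lemma \ref{lema2}, using the bipolar identity $(M^{**}(x))^*=M^*(x)$ (legitimate, since $M^*(x)=K^*\cap\{\mu\mid\mu\cdot g(x)=0\}$ is a closed convex cone, so Lemma \ref{C**} applies) to place $\mu^0$ in the correct polar. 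Your route avoids Lemma \ref{lema5} and its supporting chain (Lemmas \ref{lema7}, \ref{lema9}, Corollary \ref{lema8}) entirely, at the mild cost of two bookkeeping checks you correctly supply: that $\mu^0\in M^*(x)$ (immediate from the Fritz John conditions (\ref{1})) and that the vectors fed to Lemma \ref{lema2} are nonzero, which you settle by noting that a nonzero polar forces the cone to be proper, hence $0\notin{\rm int}(C)$ and $0\notin{\rm int}(M^{**}(x))$. It is worth observing that your termwise style is precisely what the paper itself uses later in the forward direction of Theorem \ref{th5} (``at least one of these inequalities is strict''), so your argument is arguably the more uniform one; what the paper's product-cone formulation buys is a single reusable inequality packaged as a standalone lemma rather than an inline case analysis.
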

\begin{proof}
It follows from the hypothesis that there exist $\lambda\in C^*$ and $\mu\in K^*$ such that the following conditions are satisfied
\[
\lambda\cdot J f(x)+\mu\cdot J g(x)=0,\;(\lambda,\mu)\ne (0,0),\;\mu\cdot g(x)=0.
\]
Assume the contrary that $x$ is not weakly efficient. Then there exists a feasible point $y$ such that 
\[
f(y)\in f(x)-{\rm int}(C).
\]
 According to FJ-pseudoconvexity of (P) we have that
\[
J f(x)(y-x)\in -{\rm int} (C^{**})\quad\textrm{and}\quad J g(x)(y-x)\in -{\rm int} (M^{**}(x)).
\] 
It follows from Lemma \ref{lema5} that
\[
{\rm int} (C^{**})\times {\rm int} (M^{**}(x))\subset {\rm int} (C^*\times M^*(x))^*.
\]
Therefore, 
\[
[Jf(x)(y-x),Jg(x)(y-x)]\in -{\rm int} (C^*\times M^*(x))^*.
\]
 It follows from here, from Lemma \ref{lema9} and by
$(\lambda,\mu)\in C^*\times M^*(x)$ that
\[
\lambda\cdot J f(x)(y-x)+\mu\cdot J g(x)(y-x)<0,
\]
which is a contradiction.
\end{proof}

\begin{definition}
According to the definition of strict scalar quasiconvexity we call the constraint function $g$ strictly scalarly quasiconvex at the point $x$ with $g(x)\in -K$ iff the scalar function $\mu\cdot g$ is strictly quasiconvex at the point $x$ for all $\mu\in M^*(x)$, that is
\[
g(y)\in -K,\; y\ne x\quad\Rightarrow\quad \mu\cdot g(z)<\mu\cdot g(x),\;\forall z\in(x,y).
\]
If this inequality is satisfied for every feasible point $x$, then we call $g$ strictly scalarly quasiconvex.
\end{definition}

\begin{theorem}\label{th4}
Let $X$ be a convex set and {\rm (P)} be Fr\'echet differentiable. Suppose that $f$ is quasiconvex and $g$ is strictly scalarly quasiconvex. Then the problem {\rm (P)} is FJ-pseudoconvex if and only if every Fritz John stationary point $x$ 
is a  weakly efficient global solution of {\rm (P)}.
\end{theorem}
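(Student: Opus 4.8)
The plan is to prove the equivalence by treating the two implications separately, observing at the outset that one of them is already done. The forward implication---if (P) is FJ-pseudoconvex then every Fritz John stationary point is a weakly efficient global solution---is precisely Theorem \ref{th3}, which assumes nothing beyond Fréchet differentiability and FJ-pseudoconvexity; no extra work is needed there. Hence the entire burden falls on the converse, and the quasiconvexity of $f$ together with the strict scalar quasiconvexity of $g$ will be used only in that direction.

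For the converse I would assume that every Fritz John stationary point is weakly efficient and then verify the defining implication (\ref{2}) directly. Fix $x,y\in X$ satisfying its antecedent, namely $f(y)\in f(x)-{\rm int}(C)$ and $g(x),g(y)\in -K$; since $0\notin{\rm int}(C)$ this forces $y\neq x$. Both $x$ and $y$ are then feasible and $f(y)\in f(x)-{\rm int}(C)$, so $x$ is not a weak global minimizer, and the contrapositive of the hypothesis says $x$ is not a Fritz John stationary point. Written out, this means there is \emph{no} pair $(\lambda,\mu)\in(C^*\times M^*(x))\setminus\{(0,0)\}$ with $\lambda\cdot Jf(x)+\mu\cdot Jg(x)=0$ (membership of $\mu$ in $M^*(x)$ already encodes $\mu\in K^*$ and $\mu\cdot g(x)=0$). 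This single non-stationarity statement is the engine for both halves of the consequent of (\ref{2}).

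The objective half, $Jf(x)(y-x)\in-{\rm int}(C^{**})$, is the easier one and follows the pattern of the converse part of Theorem \ref{th2}. Since $C$ is closed and convex, $C^{**}=C$ by Lemma \ref{C**}, so it suffices to reach $-{\rm int}(C)$. Quasiconvexity of $f$ gives $Jf(x)(y-x)\in -C$ at once. If this vector were not interior, then $-Jf(x)(y-x)\in C\setminus{\rm int}(C)$ and Lemma \ref{lema3} supplies $\lambda\in C^*\setminus\{0\}$ with $\lambda\cdot Jf(x)(y-x)=0$. I would then perturb $y$ along $d:=\lambda\cdot Jf(x)$: because $X$ is open and the condition $f(y)\in f(x)-{\rm int}(C)$ is \emph{open}, we have $y+\delta d\in X$ and $f(y+\delta d)\in f(x)-{\rm int}(C)$ for all small $\delta>0$, so quasiconvexity of $f$ and $\lambda\in C^*$ yield $0+\delta\norm{\lambda\cdot Jf(x)}^2\le 0$, forcing $\lambda\cdot Jf(x)=0$. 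But then $(\lambda,0)$ is a nonzero Fritz John multiplier, contradicting non-stationarity.

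The constraint half, $Jg(x)(y-x)\in-{\rm int}(M^{**}(x))$, is where the real difficulty lies, and it is exactly why $g$ is assumed \emph{strictly} scalarly quasiconvex. Strict scalar quasiconvexity implies the property (QC), so $Jg(x)(y-x)\in -M^{**}(x)$; if it were not interior, Lemma \ref{lema3} applied to the closed convex cone $M^{**}(x)$, whose polar is $M^*(x)$ (because $M^*(x)$ is closed and convex), produces $\mu\in M^*(x)\setminus\{0\}$ with $\mu\cdot Jg(x)(y-x)=0$. I would again like to perturb along $d:=\mu\cdot Jg(x)$ and force $d=0$, but the obstacle is that feasibility, in the form $\mu\cdot g(y)\le 0$, is a \emph{closed} condition: when $\mu\cdot g(y)=0$ a perturbation of $y$ can leave the region where $\mu\cdot g\le 0$, so the objective argument does not transfer verbatim. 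Strictness is what removes this obstacle. By strict scalar quasiconvexity, $\mu\cdot g(x+\tau(y-x))<\max\{\mu\cdot g(x),\mu\cdot g(y)\}=0$ for every $\tau\in(0,1)$, so I replace $y$ by an interior point $z:=x+\tau(y-x)\in X$ satisfying the \emph{strict} inequality $\mu\cdot g(z)<0$ while still $\mu\cdot Jg(x)(z-x)=\tau\,\mu\cdot Jg(x)(y-x)=0$. Now $\mu\cdot g(z)<0$ \emph{is} open, so $z+\delta d\in X$ and $\mu\cdot g(z+\delta d)<0=\mu\cdot g(x)$ for small $\delta>0$; quasiconvexity of the scalar function $\mu\cdot g$ then gives $\mu\cdot Jg(x)(z+\delta d-x)\le 0$, i.e. $0+\delta\norm{\mu\cdot Jg(x)}^2\le 0$, whence $\mu\cdot Jg(x)=0$. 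Then $(0,\mu)$ is a nonzero Fritz John multiplier, again contradicting non-stationarity, and both halves of the consequent of (\ref{2}) are proved.
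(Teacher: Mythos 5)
Your proof is correct and follows essentially the same route as the paper: Theorem \ref{th3} for the forward direction, and for the converse the same non-stationarity inequality combined with quasiconvexity, extraction of a boundary multiplier, and the perturbation argument (along $\lambda\cdot Jf(x)$, respectively $\mu\cdot Jg(x)$ at an intermediate point $z\in(x,y)$ where strictness makes $\mu\cdot g(z)<0$ an open condition) leading to a contradiction with $\mu=0$ or $\lambda=0$. The only cosmetic differences are that you invoke Lemma \ref{lema3} on $M^{**}(x)$ where the paper uses the dual formulation in Lemma \ref{lema7}, and you route $Jg(x)(y-x)\in -M^{**}(x)$ through (QC) rather than deriving it inline.
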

\begin{proof}
Let (P) be FJ-pseudoconvex. Then it follows from Theorem \ref{th3} that  every vector critical Fritz John  point $x$ is a  weakly efficient global solution of {\rm (P)}.

Conversely, suppose that every vector critical Fritz John  point $x$ is a  weakly efficient global solution of {\rm (P)}. We prove that (P) is FJ-pseudoconvex.
Let $x$ and $y$ be any feasible points for (P) such that $f(y)\in f(x)-{\rm int}(C)$.
Therefore, $x$ is not weakly efficient. We conclude from here that $x$ is not a Fritz John stationary point. Therefore,
\begin{equation}\label{10}
\lambda\cdot J f(x)+\mu\cdot J g(x)\ne 0
\end{equation}
for all points $\lambda\in C^*$, $\mu\in K^*$ such that $(\lambda,\mu)\ne (0,0)$, $\mu\cdot g(x)=0$. 

We prove that 
\[
J f(x)(y-x)\in -{\rm int}(C).
\]
  It follows from quasiconvexity of $f$ that $J f(x)(y-x)\in - C$. Suppose that
\[
J f(x)(y-x)\in -C\setminus {\rm int}(C).
\]
 It follows from Lemma \ref{lema3} that there exists 
\[
\lambda\in C^*,\;\; \lambda\ne 0\quad\textrm{such that}\quad \lambda\cdot Jf(x)(y-x)=0.
\]
 Since $f$ is continuous, it follows from $f(y)\in f(x)-{\rm int}(C)$ that there exists $\delta>0$ with 
\[
f(y+\delta\lambda\cdot Jf(x))\in f(x)-{\rm int}(C).
\]
 By quasiconvexity of $f$, we obtain that
\[
Jf(x)(y+\delta\,\lambda\cdot Jf(x)-x)\in -C.
\]
 Using that $\lambda\in C^*$, we conclude that
 \[
\lambda\cdot Jf(x)(y-x)+\delta\norm{\lambda\cdot Jf(x)}^2\le 0.
\]
 Hence, by $\lambda\cdot Jf(x)(y-x)=0$, we have $\lambda\cdot Jf(x)=0$. Let us choose in (\ref{10}) $\mu=0$ and keep the same point $\lambda$. Thus, we have a contradiction to inequality (\ref{10}). 

We prove that 
\[
J g(x)(y-x)\in -{\rm int}(M^{**}(x)).
\]
By the strict scalar quasiconvexity of $g$ we obtain that the scalar function $\mu\cdot g$ is strictly quasiconvex for every $\mu\in M^*(x)$. 
Hence, 
\[
\mu\cdot g(x+t(y-x))<\mu\cdot g(x)=0\quad\textrm{for all}\quad t\in(0,1),
\]
 because $y\ne x$. Therefore $\mu\cdot J g(x)(y-x)\le 0$ for every $\mu\in M^*(x)$ and $Jg(x)(y-x)\in -M^{**}(x)$. We prove that
\[
Jg(x)(y-x)\in -{\rm int}(M^{**}(x)).
\]
 Suppose the contrary that
\[
Jg(x)(y-x)\in -M^{**}(x)\setminus {\rm int}(M^{**}(x)).
\]
Then it follows from Lemma \ref{lema7} that there exists
\[
\alpha\in M^{*}(x),\;\; \alpha\ne 0\quad\textrm{such that}\quad \alpha\cdot Jg(x)(y-x)=0.
\]
 Let us take an arbitrary point $z$ from the segment $(x,y)$. It follows from the continuity of $\alpha\cdot g$ that there exists $\epsilon>0$ with 
\[
\alpha\cdot g(z+\epsilon\alpha\cdot Jg(x))<\alpha\cdot g(x)=0.
\]
 Therefore
\[
\alpha\cdot Jg(x)(z+\epsilon\alpha\cdot Jg(x)-x)\le 0.
\]
 We conclude from here and from
 \[
\alpha\cdot Jg(x)(z-x)=0\quad\textrm{that}\quad \alpha\cdot Jg(x)=0.
\]
On the other hand, let us choose in (\ref{10}) $\lambda=0$. Then we have
\[
\mu\cdot J g(x)\ne 0\quad\textrm{for all}\quad \mu\in M^*(x)
\]
 such that $\mu\ne 0$. Thus, we obtained a contradiction, which completes the proof. 
\end{proof}

Consider the scalar nonlinear programming problem

\bigskip
Minimize $f(x)$ subject to $g(x)\leqq 0$,\hfill (SP)
\bigskip

\noindent
where $f:X\to\R$ and $\g$ are given differentiable functions defined on some open set $X\subset\R^s$.

\begin{definition}[\cite{JOGO-1}] 
The problem {\rm (SP)} with  Fr\'echet differentiable data is called  Fritz John pseudoconvex (in short, FJ-pseudoconvex) iff  for all points $x\in X$  and $y\in X$ is satisfied the following implication:
\begin{equation}
\left.
\begin{array}{l}
f(y)< f(x) \\
g(x)\leqq 0,\; g(y)\leqq 0
\end{array}\right]
\quad\Rightarrow \quad
\left[
\begin{array}{l}
\nabla f(x)(y-x)<0\\
\nabla g(x)(y-x)<0,\; i\in I(x),
\end{array}
\right.
\end{equation}
where $I(x)$ is the set of active constraints.
\end{definition}

The following result is also new.

\begin{corollary}
Let $X$ be a convex set and the scalar problem {\rm (SP)} be Fr\'echet differentiable. Suppose that $f$ is quasiconvex and all active components of $g$ are strictly quasiconvex. Then the problem {\rm (SP)} is FJ-pseudoconvex if and only if every Fritz John stationary point $x$ 
is a  global solution of {\rm (SP)}.
\end{corollary}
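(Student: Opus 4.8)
The plan is to obtain the corollary as the specialization of Theorem \ref{th4} to the cones $C=\R_+\subset\R^1$ and $K=\R^m_+\subset\R^m$, so that the vector problem {\rm (P)} literally becomes the scalar problem {\rm (SP)}. First I would assemble the dictionary between the two settings. For $C=\R_+$ one has $C^*=C^{**}=\R_+$ and ${\rm int}(C)=(0,+\infty)$, so the premise $f(y)\in f(x)-{\rm int}(C)$ reads $f(y)<f(x)$ and the conclusion $Jf(x)(y-x)\in -{\rm int}(C^{**})$ reads $\nabla f(x)(y-x)<0$. For $K=\R^m_+$ one has $K^*=\R^m_+$, and, as already observed in the remark following implication (\ref{2}), the condition $\mu\cdot g(x)=0$ with $\mu\in K^*$, $g(x)\in -K$ forces $\mu_j=0$ on inactive indices; hence $M^*(x)$ is the face of $\R^m_+$ supported on the active set $I(x)$, $M^{**}(x)=\{v\in\R^m\mid v_j\ge 0,\ j\in I(x)\}$ and ${\rm int}(M^{**}(x))=\{v\mid v_j>0,\ j\in I(x)\}$. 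Consequently $Jg(x)(y-x)\in -{\rm int}(M^{**}(x))$ is exactly $\nabla g_j(x)(y-x)<0$ for every $j\in I(x)$. With this dictionary the premises $g(x)\in -K$, $g(y)\in -K$ become $g(x)\leqq 0$, $g(y)\leqq 0$, and implication (\ref{2}) becomes, verbatim, the implication defining FJ-pseudoconvexity of {\rm (SP)}; likewise a Fritz John stationary point of {\rm (P)} is a Fritz John stationary point of {\rm (SP)}, and a weakly efficient solution with respect to $C=\R_+$ is nothing but a global minimizer of $f$ over $S$. Thus all four notions appearing in the two statements coincide.

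It remains to check that the hypotheses of the corollary imply those of Theorem \ref{th4} under these cone choices, and here lie the only steps beyond bookkeeping. For the objective, I would invoke the standard differentiable characterization of scalar quasiconvexity: if $f$ is quasiconvex then $f(y)\le f(x)$ implies $\nabla f(x)(y-x)\le 0$, so in particular $f(y)<f(x)$ implies $\nabla f(x)(y-x)\le 0$, which is precisely quasiconvexity of $f$ in the vector sense for $C=\R_+$. For the constraint, I would show that strict quasiconvexity of every active component yields strict scalar quasiconvexity of $g$. Fix a feasible $x$ and a nonzero $\mu\in M^*(x)$; then $\mu_j\ge 0$ for $j\in I(x)$, $\mu_j=0$ otherwise, and $\mu_{j_0}>0$ for some $j_0\in I(x)$. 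For any feasible $y\ne x$ and any $z\in(x,y)$, strict quasiconvexity of each active $g_j$ gives $g_j(z)<\max\{g_j(x),g_j(y)\}=\max\{0,g_j(y)\}=0$, so summing with the nonnegative weights $\mu_j$ and using $\mu_{j_0}g_{j_0}(z)<0$ yields $\mu\cdot g(z)<0=\mu\cdot g(x)$. This is exactly the defining inequality of strict scalar quasiconvexity of $g$ at $x$.

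With both hypotheses of Theorem \ref{th4} verified, that theorem applies and gives: {\rm (P)} is FJ-pseudoconvex if and only if every Fritz John stationary point is weakly efficient. Reading this back through the dictionary established in the first paragraph produces the assertion of the corollary. The main obstacle is the constraint step of the second paragraph: one must check that strict quasiconvexity of the individual active components survives a nonnegative combination, which works precisely because, at an active index, strict quasiconvexity forces the strict inequality $g_j(z)<0$ along every open segment joining $x$ to a feasible point, so a single positive weight $\mu_{j_0}$ already makes the whole sum strictly negative.
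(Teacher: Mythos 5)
Your proposal is correct and follows exactly the route the paper intends: the corollary is stated without proof as the specialization of Theorem \ref{th4} to $C=\R_+$ and $K=\R^m_+$, and your dictionary (identifying $M^*(x)$ with the face of $\R^m_+$ supported on $I(x)$, weak efficiency with global minimality, and the two FJ-pseudoconvexity implications) together with the verification that scalar quasiconvexity of $f$ gives vector quasiconvexity w.r.t.\ $\R_+$ and that strict quasiconvexity of the active $g_j$ gives strict scalar quasiconvexity of $g$ fills in precisely the omitted details. Your handling of the nonzero $\mu\in M^*(x)$ is in fact slightly more careful than the paper's own definition, which as written would fail for $\mu=0$.
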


\section{FJ-pseudoinvex vector problems}
\label{s4}
We proved in Section \ref{s3} that if the problem (P) is FJ-pseudoconvex, then every Fritz John stationary point is a weakly effective solution. In this section, we define the most general class of differentiable problems such that this property is satisfied.

KT-invex scalar problems with inequality constraints were introduced by Martin \cite{mar85}. In his paper, Martin also proved that a problem with inequality constraints is KT-invex if and only if every Kuhn-Tucker stationary point is a global minimizer. The notion of KT-invexity were generalized later to multiobjective problems with inequality constraints (see Chapter 1 from the book \cite{ara2010} and the references therein). In the present paper, we extend the Martin's results to the more general vector problem (P) with cone constraints. In this case, the proofs of the respective claims  are more complicated than the case when $C$ and $K$ are the Paretian cones.

\begin{definition}[\cite{ggt04}]
A cone $P$ with a vertex at the origin is called pointed iff $P\cap (-P)=\{0\}$.
\end{definition}

\begin{Ch}[\cite{ggt04}]
Let $A$ be a nonempty set in the $n$-dimensional space $\R^n$. Then every point from the convex hull ${\rm conv}(A)$ is a convex combination of $n+1$ or less points from $A$.
\end{Ch}

\begin{Separation}[\cite{ggt04}]
Let $X$ be a nonempty closed convex set in $\R^n$ and $y\notin X$. Then there exists a nonzero vector $\alpha$ in $\R^n$ such that
\[
\sup_{x\in X}(\alpha\cdot x)<\alpha\cdot y.
\]
\end{Separation}

\begin{lemma}\label{lema6}
Let $K\in\R^m$ be a closed convex pointed cone with a vertex at the origin $0$. Then there exists a vector $\eta\in\R^m$, $\eta\ne 0$ such that
\[
\eta\cdot x<0,\quad\forall x\in K,\; x\ne 0.
\]
\end{lemma}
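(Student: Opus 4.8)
The plan is to produce $\eta$ by separating the origin from a compact convex set manufactured out of $K$, with the whole argument hinging on pointedness being exactly the property that keeps the origin outside that set. First I would dispose of the trivial case $K=\{0\}$, where the conclusion holds vacuously (there is no nonzero $x\in K$) for any nonzero $\eta$. So assume $K\neq\{0\}$ and set $A:=\{x\in K:\norm{x}=1\}$, the intersection of the closed cone $K$ with the unit sphere. This $A$ is nonempty and compact, and by Charatheodory's Theorem its convex hull ${\rm conv}(A)$ is the continuous image of the product of a simplex with finitely many copies of $A$, hence compact; in particular ${\rm conv}(A)$ is a nonempty closed convex set, which is what the Strong Separation Theorem will require.

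The key step is to show that $0\notin{\rm conv}(A)$, and this is precisely where pointedness enters. Suppose for contradiction that $0\in{\rm conv}(A)$. By Charatheodory's Theorem I may write $0=\sum_{i}t_i x_i$ with $t_i\ge 0$, $\sum_i t_i=1$, and $x_i\in A$. Since $K$ is a convex cone with vertex at the origin, it is closed under nonnegative linear combinations, so for each index $j$ one has $t_j x_j=-\sum_{i\neq j}t_i x_i\in K\cap(-K)=\{0\}$ by pointedness; because $\norm{x_j}=1$ this forces $t_j=0$. Running over all $j$ yields $\sum_i t_i=0$, contradicting $\sum_i t_i=1$. Hence $0\notin{\rm conv}(A)$.

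Finally I would apply the Strong Separation Theorem to the nonempty closed convex set ${\rm conv}(A)$ and the point $0\notin{\rm conv}(A)$: this gives a nonzero vector $\eta\in\R^m$ with $\sup_{x\in{\rm conv}(A)}(\eta\cdot x)<\eta\cdot 0=0$, so $\eta\cdot x<0$ for every $x\in A$. For an arbitrary $x\in K$ with $x\neq 0$, the vector $x/\norm{x}$ lies in $A$, whence $\eta\cdot(x/\norm{x})<0$ and therefore $\eta\cdot x<0$, which is exactly the desired conclusion.

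The main obstacle is the claim $0\notin{\rm conv}(A)$; once it is in hand the rest is routine, and it is the only place where the pointedness hypothesis is actually consumed. A secondary point to be careful about is the compactness (hence closedness) of ${\rm conv}(A)$, needed so that the Strong Separation Theorem applies with strict inequality; this is handled by Charatheodory's Theorem together with the compactness of $A$, so no further topological input is required.
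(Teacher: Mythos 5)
Your proof is correct and takes essentially the same route as the paper's: exclude the origin from ${\rm conv}\{x\in K:\norm{x}=1\}$ using Charatheodory's Theorem together with pointedness, then apply the Strong Separation Theorem and rescale from the unit sphere to all of $K\setminus\{0\}$. You are in fact slightly more careful than the paper, which neither disposes of the trivial case $K=\{0\}$ (where $\tilde K$ is empty, so the separation theorem does not literally apply) nor verifies that the convex hull is closed --- your compactness argument via Charatheodory supplies exactly the hypothesis the strict separation needs.
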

\begin{proof}
Denote by $\tilde K$ the set $\tilde K:=\{x\in K: \norm{x}=1\}$. Let $B$ be the convex hull ${\rm conv}(\tilde K)$ of the set $\tilde K$. We prove that the origin $0$ does not belong to the set $B$, that is $0\notin B$. Suppose the contrary that $0\in B$. By Charatheodory's theorem $0$ is a convex combination of $n+1$ or less points from $\tilde K$. In other words there exists a positive integer $k$ with $1\le k\le n+1$, points $x_1$, $x_2,\dots$, $x_k\in\tilde K$ and nonnegative numbers $\alpha_1$, $\alpha_2,\dots$, $\alpha_k$ with
$\sum_{i=1}^k\alpha_i=1$ such that
\[
0=\sum_{i=1}^k\alpha_i x_i.
\]
At least one of the numbers $\alpha_i$ is strictly positive. Without loss of generality, we can suppose that $\alpha_1>0$.
The case $k=1$ is impossible because $\norm{x_1}=1$.
Let $y=\alpha_1 x_1$ and $z=\sum_{i=2}^k\alpha_i x_i$. We have $y\in K$, $z\in K$ and $y+z=0$, because $K$ is a convex cone. Since $K$ is a pointed cone, then $y=0$, which contradicts the assumption $\norm{x_1}=1$, because $\alpha_1>0$. 
Thus $0\notin B$. Then it follows from strong separation theorem \cite{ggt04} that there exists a vector $\eta\in\R^m$, $\eta\ne 0$ such that
\[
\sup_{x\in B}\, \eta\cdot x<\eta\cdot 0=0.
\]
Therefore $\eta\cdot x<0$ for all $x\in K\setminus\{0\}$.
\end{proof}

\begin{definition} 
The problem {\rm (P)} with Fr\'echet differentiable data is called Fritz John pseudoinvex (in short, FJ-pseudoinvex) iff  for all points $x\in X$ and $y\in X$ there exists $\eta(x,y)\in\R^s$ such that the following implication holds:
\begin{equation}
\left.
\begin{array}{l}
f(y)\in f(x)-{\rm int}(C) \\
g(x)\in -K,\; g(y)\in -K
\end{array}\right]
\quad\Rightarrow \quad
\left[
\begin{array}{l}
J f(x)\eta(x,y)\in -{\rm int}(C^{**})\\
J g(x)\eta(x,y)\in -{\rm int}(M^{**}(x)).
\end{array}
\right.
\end{equation}
\end{definition}

\begin{theorem}\label{th5}
Let the problem {\rm (P)} be Fr\'echet differentiable. Suppose that $C$ and $K$ are closed convex cones, the polar cones $C^*$ and $K^*$ are pointed. Then {\rm (P)} is FJ-pseudoinvex if and only if every Fritz-John vector critical point is a weak global minimizer.
\end{theorem}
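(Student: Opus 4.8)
The plan is to prove the two implications separately, observing that the forward direction is essentially a verbatim repetition of the proof of Theorem \ref{th3} (with the displacement $y-x$ replaced by the invexity vector $\eta(x,y)$), while the converse is the substantive part and rests on a Gordan-type theorem of the alternative whose validity hinges on the pointedness hypotheses.

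For the direction ``FJ-pseudoinvex $\Rightarrow$ every Fritz John critical point is a weak global minimizer'' I would argue by contradiction exactly as in Theorem \ref{th3}. Let $x$ be a Fritz John stationary point, so there are $\lambda\in C^*$ and $\mu\in K^*$ with $(\lambda,\mu)\neq(0,0)$, $\mu\cdot g(x)=0$ (hence $\mu\in M^*(x)$) and $\lambda\cdot Jf(x)+\mu\cdot Jg(x)=0$. If $x$ were not a weak global minimizer there would be a feasible $y$ with $f(y)\in f(x)-{\rm int}(C)$; then $g(x),g(y)\in -K$, so the antecedent of FJ-pseudoinvexity holds and yields some $\eta=\eta(x,y)$ with $Jf(x)\eta\in-{\rm int}(C^{**})$ and $Jg(x)\eta\in-{\rm int}(M^{**}(x))$. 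Applying Lemma \ref{lema5} to the cones $C^*$ and $M^*(x)$ (whose polars $C^{**}=C$ and $M^{**}(x)$ have nonempty interior) gives $(Jf(x)\eta,Jg(x)\eta)\in-{\rm int}(C^*\times M^*(x))^*$, and Lemma \ref{lema9} applied to the cone $C^*\times M^*(x)$ with the nonzero element $(\lambda,\mu)$ then produces $\lambda\cdot Jf(x)\eta+\mu\cdot Jg(x)\eta<0$. This contradicts $\lambda\cdot Jf(x)+\mu\cdot Jg(x)=0$ evaluated at $\eta$.

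For the converse I must exhibit for every pair $(x,y)$ a vector $\eta(x,y)$ making the implication hold; when the antecedent fails I simply take $\eta(x,y)=0$, so I may assume $f(y)\in f(x)-{\rm int}(C)$ and $g(x),g(y)\in-K$. Then $x$ and $y$ are feasible and $x$ is not a weak global minimizer, so by hypothesis $x$ is not a Fritz John stationary point; equivalently there is no $(\lambda,\mu)\in C^*\times M^*(x)\setminus\{(0,0)\}$ with $\lambda\cdot Jf(x)+\mu\cdot Jg(x)=0$. Writing $P=C^{**}=C$ and $Q=M^{**}(x)$ (so that $C^*=P^*$ and, by Lemma \ref{C**} applied to the closed convex cone $M^*(x)$, $M^*(x)=Q^*$), the required conclusion is precisely that the range of the linear map $\eta\mapsto(Jf(x)\eta,Jg(x)\eta)$ meets the open cone ${\rm int}(P)\times{\rm int}(Q)$ shifted by the sign, i.e. meets $-\bigl({\rm int}(P)\times{\rm int}(Q)\bigr)$.

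The heart of the argument, and the step I expect to be the main obstacle, is thus a generalized Gordan alternative: either the subspace $R:=\{(Jf(x)\eta,Jg(x)\eta):\eta\in\R^s\}$ meets $-\bigl({\rm int}(P)\times{\rm int}(Q)\bigr)$, or there is a nonzero $(\lambda,\mu)\in P^*\times Q^*$ with $\lambda\cdot Jf(x)+\mu\cdot Jg(x)=0$. I would establish it by separating the subspace $R$ from the disjoint open convex set $-\bigl({\rm int}(P)\times{\rm int}(Q)\bigr)$ via the Strong Separation Theorem: a separating functional is bounded above on the subspace $R$, hence vanishes on $R$, which gives $\lambda\cdot Jf(x)+\mu\cdot Jg(x)=0$, and its constant sign on the cone forces, after possibly replacing $(\lambda,\mu)$ by its negative, $\lambda\in P^*=C^*$ and $\mu\in Q^*=M^*(x)$ by a scaling argument; this is a nonzero Fritz John multiplier, contradicting non-stationarity of $x$. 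The delicate point is that this separation requires the set ${\rm int}(P)\times{\rm int}(Q)$ to be nonempty, and here the pointedness hypotheses enter decisively. Since $C^*$ is pointed, Lemma \ref{lema6} supplies $\eta_0\neq0$ with $\eta_0\cdot\lambda<0$ for all $\lambda\in C^*\setminus\{0\}$, whence $-\eta_0\in{\rm int}(C^{**})$ by Lemma \ref{lema7}, so ${\rm int}(P)=\mathrm{int}(C)\neq\emptyset$; and since $M^*(x)\subseteq K^*$ with $K^*$ pointed, $M^*(x)$ is pointed as a subcone, so the same reasoning gives ${\rm int}(Q)={\rm int}(M^{**}(x))\neq\emptyset$. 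Once both interiors are known to be nonempty the alternative applies, producing the desired $\eta(x,y)$ and completing the proof.
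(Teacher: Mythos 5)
Your forward implication is essentially the paper's own argument (the proof of Theorem \ref{th3} with $y-x$ replaced by $\eta(x,y)$) and is correct, but your converse takes a genuinely different route. The paper works in the domain space $\R^s$: it forms the cone $P=\{\lambda\cdot Jf(x)+\mu\cdot Jg(x)\mid \lambda\in C^*,\ \mu\in K^*,\ \mu\cdot g(x)=0\}$, proves $P$ is pointed (this is exactly where pointedness of $C^*$ and $K^*$ enters, combined with non-stationarity of $x$), and applies Lemma \ref{lema6} to obtain a single $\eta$ with $p\cdot\eta<0$ for every nonzero $p\in P$; choosing $\mu=0$ and then $\lambda=0$ and invoking Lemma \ref{lema7} gives the two strict inclusions. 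You instead work in the image space $\R^n\times\R^m$, separating the range subspace $R=\{(Jf(x)\eta,Jg(x)\eta)\mid\eta\in\R^s\}$ from the open cone $-\bigl({\rm int}(C)\times{\rm int}(M^{**}(x))\bigr)$, a Gordan-type alternative, and you use pointedness (via Lemmas \ref{lema6} and \ref{lema7}) only to guarantee that these interiors are nonempty. These are dual forms of the same alternative, and your version has a genuine advantage: the paper's application of Lemma \ref{lema6} to $P$ silently requires $P$ to be closed, which is never verified (the linear image of a closed convex cone need not be closed, and the conclusion of Lemma \ref{lema6} can fail for non-closed pointed convex cones), whereas your separation involves only the subspace $R$, which is automatically closed, and an open convex set. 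Your identifications $C^{**}=C$ and $(M^{**}(x))^*=M^*(x)$ via Lemma \ref{C**} are legitimate, since $M^*(x)$ is a closed convex cone, and your reduction of the vacuous case to $\eta(x,y)=0$ is fine.

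One citation in your converse must be repaired: the Strong Separation Theorem cannot be applied to $R$ and $-\bigl({\rm int}(C)\times{\rm int}(M^{**}(x))\bigr)$, because the origin lies in the closure of both sets, so the distance between them is zero and no separation with a strict gap exists. What your argument actually needs is the ordinary separation theorem for disjoint convex sets: for instance, set $D:=R+{\rm int}(C)\times{\rm int}(M^{**}(x))$, an open convex set with $0\notin D$ precisely when $R$ misses the negative open cone, and separate the point $0$ from $D$. The resulting nonzero functional $(\lambda,\mu)$ is bounded below on a translate of the subspace $R$, hence vanishes identically on $R$, giving $\lambda\cdot Jf(x)+\mu\cdot Jg(x)=0$; it is nonnegative on ${\rm int}(C)\times{\rm int}(M^{**}(x))$, hence on $C\times M^{**}(x)$ because a closed convex cone with nonempty interior is the closure of its interior, so $\lambda\in C^*$ and $\mu\in(M^{**}(x))^*=M^*(x)$, which is the contradictory Fritz John multiplier pair. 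With this standard substitution your proof is complete and correct.
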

\begin{proof}
Suppose that {\rm (P)} is FJ-pseudoinvex and $x$ is a Fritz John vector critical point that is there exist $\lambda^0\in C^*$ and $\mu^0\in K^*$ that satisfy Conditions (\ref{1}). We prove that $x$ is a weakly efficient solution. Assume the contrary that $x$ is not a weak global minimizer. Therefore, there exists a feasible point $y$ such that $f(y)\in f(x)-{\rm int}(C)$.
It follows from FJ-pseudoinvexity that there exists a vector $\eta\in\R^s$ such that 
\[
J f(x)\eta \in -{\rm int}(C^{**})\quad\textrm{and}\quad
J g(x)\eta \in -{\rm int}(M^{**}(x)).
\]
 By Equations (\ref{1}) we have
\begin{equation}\label{3}
\lambda^0\cdot J f(x)\eta+\mu^0\cdot J g(x)\eta=0,\quad \mu^0 g(x)=0.
\end{equation}
On the other hand, by (\ref{3}) we obtain that $\mu^0\in M^*(x)$. Then, 
 by 
\[
\lambda^0\in C^*,\;\; J f(x)\eta\in -{\rm int}(C^{**})= -{\rm int}(C),\;\; \mu^0\in M^*(x)\;\;{\textrm and}\;\; J g(x)\eta\in -{\rm int}(M^{**}(x))
\]
 we have
\[
\lambda^0\cdot J f(x)\eta\le 0,\quad \mu^0\cdot J g(x)\eta\le 0.
\]
According to Lemmas \ref{lema3} and \ref{lema7} at least one of these inequalities is strict, because $(\lambda^0,\mu^0)\ne (0,0)$. Thus we obtain a contradiction to Equation (\ref{3}).

We prove the converse claim. Suppose that each Fritz-John critical point is a weak global minimizer. We prove that (P) is FJ-pseudoinvex. Let
\[
x\in X, y\in X,\;\; g(x)\in -K,\;\; g(y)\in -K\quad\textrm{and}\quad f(y)\in f(x)-{\rm int}(C).
\]
 Therefore, $x$ is not a weak global minimizer. It follows from the hypothesis that $x$ is not a Fritz John critical point. Hence, there do not exist $(\lambda,\mu)\ne (0,0)$  such that Equations (\ref{1}) are satisfied.

Consider the set
\[
P:=\{p\in\R^s\mid p=\lambda\cdot Jf(x)+\mu\cdot Jg(x),\;\lambda\in C^*,\;\mu\in K^*,\; \mu\cdot g(x)=0\}.
\]
$P$ is a convex cone, whose vertex is at the origin $0$. We prove that $P$ is pointed. Indeed, let $p\in P\cap (-P)$. Then
\[
p=\lambda_1\cdot Jf(x)+\mu_1\cdot Jg(x),\quad  -p=\lambda_2\cdot Jf(x)+\mu_2\cdot Jg(x),
\]
where $\lambda_1$, $\lambda_2\in C^*$, $\mu_1$, $\mu_2\in M^*(x)$. It follows from here that
\[
p-p=0=(\lambda_1+\lambda_2)\cdot Jf(x)+(\mu_1+\mu_2)\cdot Jg(x).
\]
Since $x$ is not a Fritz John critical point, then we have $\lambda_1+\lambda_2=0$ and $\mu_1+\mu_2=0$. Therefore, $\lambda_2=-\lambda_1\in -C^*$, $\mu_2=-\mu_1\in -K^*$. Using that $C^*$ and $K^*$ are pointed we conclude that 
$\lambda_1=\mu_1=0$. Therefore $p=0$, which implies that $P$ is pointed.
Then it follows from Lemma \ref{lema6} that there exists a vector $\eta\in\R^s\setminus\{0\}$ such that $p\cdot\eta<0$ for all $p\in P$, $p\ne 0$. Since $p=0$ if and only if $(\lambda,\mu)=(0,0)$, then
\begin{equation}\label{4}
(\lambda\cdot Jf(x)+\mu\cdot Jg(x))\eta<0,\quad\forall (\lambda,\mu)\ne (0,0)\quad\textrm{such that}\quad \mu\cdot g(x)=0.
\end{equation}
Choose $\mu=0$ in (\ref{4}). It follows from (\ref{4}) and Lemma \ref{lema7} that
$J f(x)\eta\in -{\rm int}(C^{**})$. Choose $\lambda=0$ in (\ref{4}). Then it follows from (\ref{4}) and the definition of the cone $P$ that
\[
\mu\cdot Jg(x)\eta<0,\quad\forall \mu\in M^*(x), \mu\ne 0.
\]
 Then by the definition of $M^*(x)$ and Lemma \ref{lema7} we obtain that $J g(x)\eta\in -{\rm int}(M^{**}(x))$. Therefore, (P) is FJ-pseudoinvex.
\end{proof}

The following example shows that ${\rm int}(C)\ne\emptyset$ does not imply ${\rm int}(C^*)\ne\emptyset$:
\begin{example}
Consider the cone $C$ defined by $C=\{x=(x_1,x_2)\in\R^2\mid x_2\ge 0\}$. Then 
\[
C^*=\{x=(x_1,x_2)\in\R^2\mid x_1=0,\; x_2\ge 0\}.
\] 
\end{example}

\end{document}